\renewcommand{\mathcal}{\mathscr}
\renewcommand{\P}{\mathrm{P}}
\newcommand {\E}{{\mathrm E}}
\newcommand{\1}{{\bf 1}}
\newcommand{\R}{\mathbb{R}}
\newtheorem{stat}{Statement}[section]
\newtheorem{prop}[stat]{Proposition}
\newtheorem{thm}[stat]{Theorem}
\newtheorem{lem}[stat]{Lemma}
\theoremstyle{definition}
\newtheorem{remark}[stat]{Remark}
\numberwithin{equation}{section}
\begin{document}
\title{ \bf On the density of systems of non-linear spatially homogeneous SPDEs}

\author{Eulalia Nualart$^{1}$}

\date{}

\maketitle

\footnotetext[1]{Institut Galil\'ee, Universit\'e
    Paris 13, 93430 Villetaneuse, France.
    \texttt{eulalia@nualart.es}, http://nualart.es}

\maketitle
\begin{abstract}
In this paper, we consider a system of $k$ second order non-linear
stochastic partial differential equations with spatial dimension $d \geq 1$,
driven by a $q$-dimensional Gaussian noise, which is white in time
and with some spatially homogeneous covariance.
The case of a single equation and a one-dimensional noise,
has largely been studied in the literature.
The first aim of this paper is to give a survey of some of the existing results. We will start with the
existence, uniqueness and H\"older's continuity of the solution. For this, the extension of Walsh's stochastic integral
to cover some measure-valued integrands will be recalled.
We will then recall the results concerning the
existence and smoothness of the density, as well as its strict positivity, which are obtained using techniques of Malliavin calculus.
The second aim of this paper is to show how these results extend to our system of SPDEs.
In particular, we give sufficient conditions in order to have existence and smoothness of the density on the set where the columns of the diffusion matrix
span $\R^k$. We then prove that the density is strictly positive in a point if the connected component of the set where the columns of the diffusion matrix
span $\R^k$ which contains this point has a non void intersection with the support of the law of the solution.
We will finally check how all these results apply to the case
of the stochastic heat equation in any space dimension and the
stochastic wave equation in dimension $d\in \{1,2,3\}$.
\end{abstract}

\vskip 1,5cm {\it \noindent AMS 2000 subject classifications:} 60H15, 60H07. \vskip 10pt

\noindent {\it Key words and phrases}. Spatially homogeneous Gaussian noise, Malliavin calculus, non-linear stochastic partial differential equations, strict positivity of the density.  \vskip 4cm \pagebreak

\section{Introduction}
Consider the system of stochastic partial differential equations:
\begin{equation} \label{equa1}
Lu_i (t,x)= \sum_{j=1}^{q} \sigma_{ij}(u(t,x)) \dot{W}^{j}(t,x) +
b_i(u(t,x)), \; \; t \geq 0, \; x \in \R^d,
\end{equation}
$i=1,...,k$, with vanishing initial conditions. Here $L$ is a second
order differential operator, and $\sigma_{i j},b_i:\R^{k} \mapsto \R$
are globally Lipschitz functions, which are the entries of a $k\times q$ matrix $\sigma$ and a $k$-dimensional vector $b$.
We denote by $\sigma_1,...,\sigma_q$ the columns of the matrix $\sigma$.
The driving perturbation
$\dot{W}(t,x)=(\dot{W}^1(t,x),...,\dot{W}^{q}(t,x))$ is a
$q$-dimensional Gaussian noise which is white in time and with a
spatially homogeneous covariance $f$, that is,
\begin{equation*}
\E \,  [\dot{W}^i(t,x) \dot{W}^j(s,y)]= \delta(t-s) f(x-y) \delta_{ij},
\end{equation*}
where $\delta(\cdot)$ denotes the Dirac delta function, $\delta_{ij}$ the Kronecker symbol, and
$f$ is a positive continuous function on $\R^d \setminus \{0\}$.

The basic examples we are interested in are the stochastic wave and heat equations with vanishing initial conditions,
that is,
$L=\frac{\partial^2}{\partial t^2}-\Delta$ and $L=\frac{\partial}{\partial t}-\Delta$, where $\Delta$ denotes the
Laplacian
operator in $\R^d$, and the spatial covariance $f$ to be the Riesz kernel, that is, $f(x)=\Vert x \Vert^{-\beta}$, $0<\beta <d$.

Let $(\mathcal{F}_t)_{t \geq 0}$ denote the filtration generated by
$W$, and let
$T>0$ be fixed. By definition, the solution to the formal equation
(\ref{equa1}) is an adapted stochastic process $\{
u(t,x)=(u_1(t,x),...,u_{k}(t,x)), (t,x) \in [0,T] \times \R^d\}$
such that
\begin{equation} \label{equa2}
\begin{split}
u_i(t,x)&= \sum_{j=1}^{q} \int_0^t \int_{\R^d} \Gamma(t-s, x-y)  \sigma_{ij} (u(s,y)) W^j(ds, dy) \\
&\qquad +\int_0^t \int_{\R^d} b_i(u(t-s,x-y)) \Gamma(s, dy) ds,
\end{split}
\end{equation}
where $\Gamma$ denotes the fundamental solution of the deterministic equation $Lu=0$.

Recall that when $\Gamma(t,x)$ is a real valued function, the
stochastic integral appearing in (\ref{equa2}) is the classical
Walsh stochastic integral (see \cite{Walsh:86}). However, when
$\Gamma$ is measure, Dalang \cite{Dalang:99} extended Walsh's
stochastic integral using techniques of Fourier analysis, and
covered, for instance, the case of the wave equation in dimension
three. D.Nualart and Quer-Sardanyons \cite{Nualart:07} extend
Walsh's stochastic integral using techniques of stochastic
integration with respect to a cylindrical Wiener process (see
\cite{Daprato:92}), in order to cover some classes of measure-valued
integrands. 

In this paper, we are interested in studying the existence, smoothness and strict positivity of the density
of the solution to the system of SPDEs (\ref{equa1}), on the set where $\{\sigma_1,...,\sigma_q\}$ span $\R^k$. The case of a single equation 
has largely been studied in the literature, so our aim is to give a survey of the known results and explain how they
extend to the case of a system of equations.

One of the motivations of the results of this article, is to develop
in a further work potential theory for solutions of systems of the
type (\ref{equa1}) (see \cite{Dalang:09b}). For the moment
potential theory for systems of non-linear SPDEs has been studied by
Dalang and E.Nualart in \cite{Dalang:04} for the wave equation, and
by Dalang, Khoshnevisan and E.Nualart in \cite{Dalang:08} and
\cite{Dalang:09}, for the heat equation, all driven by space-time
white noise. That is, taking in equation (\ref{equa1}), $d=1$, $f$
the Dirac delta function and $L$ the fundamental solution of the
deterministic wave or heat equation. In all these works, the
existence, smoothness, and strict positivity
of the density of the solution to the system of SPDEs is required.

The paper is organized as follows. Section $2$ deals with the existence and uniqueness of the solution
to (\ref{equa2}). For this, we first need to define rigorously the noise $W$, and explain in which sense the stochastic integral
in (\ref{equa2}) is understood, recalling the mentioned results in \cite{Dalang:99} and \cite{Nualart:07}. 
Studying the density of an SPDE is sometimes related to the H\"older continuity properties of the paths of its solution.
Thus, we will recall some results concerning the H\"older continuity of the solution 
to the stochastic heat and wave equations. This will be done in Section 3.
Section 4 will be devoted to the study of the existence and smoothness of the density of the solution to (\ref{equa1}). For this, some elements of Malliavin calculus
need to be introduced. Finally, the aim of Section 5 is to show the strict positivity of the density. 
This last result turns out to be new in the literature. Its proof extends the method used by Bally and Pardoux in  
\cite{Bally:98}
for the case of the stochastic heat equation driven by a space-time white noise, in our spatially homogeneous situation.

\section{The stochastic integral}

The aim of this section is to recall the results concerning the existence and uniqueness of the solution
to our class of SPDEs (\ref{equa1}). For this, we will first recall in our $q$-dimensional situation the extension of Walsh's stochastic integral given in \cite{Nualart:07}, that uses Da Prato and Zabczyk's stochastic integration theory with respect to cylindrical Wiener processes. In the recent paper \cite{Dalang:10}, Dalang and Quer-Sardanyons extensively explain how this extension is related to the pioneer work by Dalang in \cite{Dalang:99}.
  
Fix a time interval $[0,T]$. The Gaussian random perturbation
$W=(W^1,...,W^{q})$ is a $q$-dimensional zero mean Gaussian family
of random variables $W=\{W^j(\varphi), 1\leq j \leq q, \varphi \in
\mathcal{C}^{\infty}_0([0,T] \times \R^{d}; \R)\}$, defined on a
complete probability $(\Omega, \mathcal{F}, \P)$ with covariance
\begin{equation*}
\E [W^i(\varphi) W^j(\psi) ] = \delta_{ij} \int_0^T
\int_{\R^d}\int_{\R^d} \varphi(t,x) f(x-y) \psi(t,y) \, dx dy dt,
\end{equation*}
where $f: \R^d \rightarrow \R_+$ is a continuous function on $\R^d\setminus \{0\}$ which is integrable in a neighborhood of $0$.

Observe that in order that there exists a Gaussian process with this covariance functional, it is necessary and sufficient
that the covariance functional is non-negative definite. This implies that $f$ is symmetric, and is equivalent to the fact that it is the Fourier transform of a non-negative tempered measure $\mu$ on $\R^d$.
That is,
$$
f(x)= \mathcal{F} \mu (x):= \int_{\R^d} e^{-2 \pi i x \cdot \xi} \mu(d\xi),
$$
and for some integer $m \geq 1$,
$$
\int_{\R^d} \frac{\mu(d\xi)}{(1+\Vert \xi \Vert^2)^m} < +\infty.
$$

Elementary properties of the convolution and Fourier transform show that
this covariance can also be written in terms of the measure $\mu$ as
\begin{equation*}
\E [W^i(\varphi) W^j(\psi) ] =  \delta_{ij} \int_0^T  \int_{\R^d}
\mathcal{F} \varphi(t)(\xi) \overline{\mathcal{F} \psi(t) (\xi)}
\mu(d\xi) dt,
\end{equation*}
where $\overline{\mathcal{F} \psi}$ denotes the complex conjugate of $\mathcal{F} \psi$.

Let $\mathcal{H}^{q}$ denote the completion of the Schwartz space
$\mathcal{S}(\R^{d}; \R^{q})$ of $\mathcal{C}^\infty(\R^{d};
\R^{q})$ functions with rapid decrease,
 endowed with the inner product
\begin{equation*}
\langle \varphi, \psi \rangle_{\mathcal{H}^{q}}=\sum_{\ell=1}^{q}
\int_{\R^d}   \int_{\R^d}  \varphi_{\ell}(x) f(x-y) \psi_{\ell}(y)
\, dx dy =\sum_{\ell=1}^{q} \int_{\R^d} \mathcal{F}
\varphi_{\ell}(\xi) \overline{\mathcal{F} \psi_{\ell} (\xi)} \mu(d
\xi),
\end{equation*}
$\phi, \psi \in \mathcal{S}(\R^{d}; \R^{q})$. Notice that
$\mathcal{H}^{q}$ may contain distributions. Set
$\mathcal{H}^q_T=L^2([0,T]; \mathcal{H}^{q})$. In particular,
$\mathcal{H}^q_T$ is the completion of the space
$\mathcal{C}^{\infty}_0([0,T] \times \R^{d}; \R^{q})$ with respect
to the scalar product
\begin{equation*}
\langle \varphi, \psi \rangle_{\mathcal{H}_T^q}=\sum_{\ell=1}^{q}
\int_0^T \int_{\R^d}   \mathcal{F} \varphi_{\ell}(t)(\xi)
\overline{\mathcal{F} \psi_{\ell}(t) (\xi)} \mu(d \xi) dt.
\end{equation*}
Then the Gaussian family $W$ can be extended to $\mathcal{H}^q_T$ and
we will use the same notation $\{W(g), g  \in \mathcal{H}^q_T\}$,
where $W(g)=\sum_{i=1}^{q} W^i(g_i)$.

Now, set $W_t(h)=\sum_{i=1}^{q} W^i(1_{[0,t]}h_i)$,
for any $t \geq 0$, $h \in \mathcal{H}^{q}$. Then $\{W_t, t \in
[0,T]\}$ is a cylindrical Wiener process in the Hilbert space
$\mathcal{H}^{q}$ (cf. \cite[Section 4.3.1]{Daprato:92}). That is,
for any $h \in \mathcal{H}^{q}$, $\{W_t(h), t \in [0,T]\}$ is a
Brownian motion with variance $t\Vert h \Vert^2_{\mathcal{H}^{q}}$,
and
$$
\E [W_s(h) W_t(g) ]=(s \wedge t)  \langle h, g
\rangle_{\mathcal{H}^{q}}.
$$
Let $(\mathcal{F}_t)_{t \geq 0}$ denote the $\sigma$-field generated
by the random variables $\{W_s(h), h \in \mathcal{H}^{q}, 0 \leq s
\leq t\}$ and the $\P$-null sets. We define the predictable
$\sigma$-field as the  $\sigma$-field in $\Omega \times [0,T]$
generated by the sets $\{ (s,t] \times A, 0\leq s<t \leq T, A \in
\mathcal{F}_s \}$. Then following \cite[Chapter 4]{Daprato:92}, we
can define the stochastic integral of any predictable process $g \in
L^2(\Omega \times [0,T]; \mathcal{H}^{q})$ with respect to the
cylindrical Wiener process $W$ as
$$
\int_0^T \int_{\R^d} g \cdot dW:=\sum_{j=1}^{\infty} \int_0^T \langle g_t, e_j \rangle_{\mathcal{H}^q} dW_t(e_j),
$$
where $(e_j)$ is an orthonormal basis of $\mathcal{H}^q$.
Moreover, the following isometry property holds:
$$
\E \biggl[ \big\vert \int_0^T \int_{\R^d} g \cdot dW \big\vert^2\biggr]= \E \biggl[ \int_0^T \Vert
g_t \Vert^2_{\mathcal{H}^{q}} dt \biggr].
$$

We next introduce the following condition on the fundamental solution of $Lu=0$, $\Gamma$.
\begin{itemize}
\item[{\bf (H1)}] For all $t>0$, $\Gamma(t)$ is a nonnegative distribution with rapid decrease, such that
\begin{equation}  \label{hyp1}
\int_0^T \int_{\R^d} \vert \mathcal{F}\Gamma(t) (\xi) \vert^2 \mu(d
\xi) dt<+\infty.
\end{equation}
Moreover, $\Gamma$ is a nonnegative measure of the form $\Gamma(t, dx) dt$ such that
\begin{equation}  \label{hyp0}
\sup_{t \in [0,T]} \int_{\R^d} \Gamma(t, dx)  \leq C_T < +\infty.
\end{equation}
\end{itemize}

Then \cite[Lemma 3.2 and Proposition 3.3.]{Nualart:07} show that under condition {\bf (H1)}, $\Gamma$ belongs to $\mathcal{H}_T$, and
one can define the stochastic integral of a predictable process $G=G(t,dx)=Z(t,x)
\Gamma(t, dx) \in L^2(\Omega \times [0,T]; \mathcal{H}^{q})$ with
respect to $W$, provided that $Z=\{Z(t,x), (t,x) \in [0,T] \times
\R^d\}$ is a predictable process such that
\begin{equation*}
\sup_{(t,x) \in [0,T] \times \R^d} \E[\Vert Z(t,x) \Vert^2] < +\infty.
\end{equation*}
In this case, we denote the stochastic integral as
$$
\int_0^T \int_{\R^d} G(s,y) \cdot W(ds, dy) =\int_0^T \int_{\R^d} \Gamma(s,y) Z(s,y) \cdot W(ds, dy).
$$

We next state the existence and uniqueness result of the solution to
equation (\ref{equa2}). This is an extension to system of SPDEs of \cite[Theorem 4.1]{Nualart:07}
(see also \cite[Theorem 13]{Dalang:99}) and can be proved in the same way.
\begin{thm} 
Under condition {\bf (H1)}, there exists a unique adapted process
$\{ u(t,x)=(u_1(t,x),...,u_{k}(t,x)), (t,x) \in [0,T] \times \R^d\}$
solution of equation \textnormal{(\ref{equa2})}, which is continuous
in $L^2$ and satisfies that for all $T >0$ and $p \geq 1$,
\begin{equation} \label{supu}
\sup_{(t,x) \in [0,T] \times \R^d} \E[\Vert u(t,x) \Vert^p] < +\infty.
\end{equation}
\end{thm}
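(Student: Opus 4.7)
The plan is to follow the standard Picard iteration argument, extending the scalar case of \cite[Theorem 4.1]{Nualart:07} coordinate by coordinate. Setting $u^0(t,x) = 0$, I would define iteratively, for $i=1,\ldots,k$ and $n\geq 0$,
\begin{equation*}
u_i^{n+1}(t,x) = \sum_{j=1}^q \int_0^t \int_{\R^d} \Gamma(t-s,x-y) \sigma_{ij}(u^n(s,y)) W^j(ds,dy) + \int_0^t \int_{\R^d} b_i(u^n(t-s,x-y)) \Gamma(s,dy) ds.
\end{equation*}
The linear growth of $\sigma_{ij}$ inherited from its Lipschitz property, together with the characterization of admissible integrands recalled at the end of Section~2, guarantees that each stochastic integral is well-defined as soon as $u^n$ is predictable and $\sup_{(t,x)}\E[\Vert u^n(t,x)\Vert^2]<+\infty$; predictability propagates through the iteration, so the only genuine task is the uniform moment bound.

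First, I would show by induction on $n$ that, for every $p\geq 2$, $\sup_{(t,x)\in[0,T]\times\R^d} \E[\Vert u^n(t,x)\Vert^p] < +\infty$, with a bound that does not depend on $n$. For the stochastic part $M_i^n$, Burkholder's inequality combined with the isometry for the cylindrical integral and the spatial stationarity of the noise yields an estimate of the form
\begin{equation*}
\E[\vert M_i^n(t,x)\vert^p] \leq C_p \left(\int_0^t J(s)\,ds\right)^{p/2-1} \int_0^t J(t-s)\,\sup_{y\in\R^d}\E[1+\Vert u^n(s,y)\Vert^p]\,ds,
\end{equation*}
where $J(s) = \int_{\R^d} \vert \mathcal{F}\Gamma(s)(\xi)\vert^2 \mu(d\xi)$. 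The deterministic drift term is handled by H\"older's inequality and (\ref{hyp0}). Since (\ref{hyp1}) asserts $\int_0^T J(s)\,ds<+\infty$, a Gronwall-type argument, applied to $\sup_y\E[\Vert u^{n+1}(s,y)\Vert^p]$, closes the induction and delivers (\ref{supu}).

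Next, essentially the same estimates, now using the global Lipschitz bounds for $\sigma_{ij}$ and $b_i$ in place of linear growth, give
\begin{equation*}
\sup_{y\in\R^d} \E[\Vert u^{n+1}(t,y)-u^n(t,y)\Vert^p] \leq C \int_0^t \bigl(J(t-s)+\Gamma(t-s,\R^d)\bigr)\sup_{y\in\R^d}\E[\Vert u^n(s,y)-u^{n-1}(s,y)\Vert^p]\,ds.
\end{equation*}
Iterating this bound shows that $\{u^n\}$ is Cauchy in $L^p(\Omega)$ uniformly in $(t,x)$, hence converges to a limit $u$ that satisfies (\ref{equa2}) and inherits the moment bound (\ref{supu}). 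Uniqueness follows from the same Gronwall argument applied to the difference of two hypothetical solutions, and $L^2$-continuity is obtained by controlling $\E[\Vert u(t,x)-u(t',x')\Vert^2]$ via the same type of bounds, using continuity in $(t,x)$ of the translates of $\Gamma$ in $\mathcal{H}^q_T$ and dominated convergence.

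The main obstacle is conceptual rather than computational: because $\Gamma(t,\cdot)$ is only a nonnegative measure, the stochastic integral cannot be analyzed pointwise in $y$, and classical Walsh-type estimates are unavailable. The essential technical input is therefore the extension of the stochastic integral recalled in Section~2, which permits the BDG/isometry bound above to be written through the $\mathcal{H}^q_T$-norm; once this is in hand, the extension from $k=q=1$ to the vector-valued setting reduces to routine bookkeeping, since the noises $W^1,\ldots,W^q$ are independent and the Lipschitz and linear-growth constants of the vector-valued coefficients are controlled by those of their scalar entries.
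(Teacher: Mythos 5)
Your proposal is correct and matches the intended argument. The paper gives no proof beyond noting that the result extends \cite[Theorem 4.1]{Nualart:07} and \cite[Theorem 13]{Dalang:99} ``and can be proved in the same way,'' and the Picard iteration you sketch --- BDG/isometry bound through the $\mathcal{H}^q_T$-norm, an extended Gronwall lemma to close the uniform moment and Cauchy estimates, then passage to the limit --- is precisely that scalar argument carried out coordinate by coordinate.
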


The basic examples we are interested in are the stochastic heat and wave equations. 
More precisely, it is well-known (see for e.g. \cite{Dalang:99}) that if $L$ is the heat operator in $\R^d$, $d \geq 1$,
that is, $L=\frac{\partial}{\partial t}-\Delta$, where $\Delta$ denotes the
Laplacian
operator in $\R^d$, or if $L$ is the wave operator in $\R^d$, $d \in \{1,2,3\}$,
that is, $L=\frac{\partial^2}{\partial t^2}-\Delta$, condition {\bf (H1)} is satisfied if and only if
\begin{equation} \label{hypot}
\int_{\R^d} \frac{\mu(d\xi)}{(1+\Vert \xi \Vert^2)} < +\infty.
\end{equation}

For instance, one can take $f$ to be a Riesz kernel, that is, $f(x)=\Vert x \Vert^{-\beta}$, $0 < \beta < d$.
Then, $\mu(d\xi)=c_{d,\beta} \Vert \xi \Vert^{\beta-d} d\xi$, and (\ref{hypot}) holds if and only if $0<\beta< (2 \wedge d)$.

\section{H\"older continuity of the solution}

Studying existence, smoothness and strict positivity of the density of the solution to the system of equations (\ref{equa1}), will require
to know the behaviour of the moments of the increments of the solution, which in particular implies the H\"older continuity
of the paths. Let us introduce the following conditions.

\begin{itemize}
\item[{\bf (H2)}] There exists $\gamma_1>0$ such that for all $t \in [0,T]$, $h \in [0, T-t]$, $x \in \R^d$, and
$p>1$,
\begin{equation*}
 \E \, [\Vert u(t+h,x)-u(t, x)\Vert^p ] \leq c_{p,T} h^{\gamma_1 p},
\end{equation*}
for some constant $c_{p,T}>0$. In particular, for all $\delta>0$ the trajectories of
$u$ are $(\gamma_1-\delta)$-H\"older continuous in time.

\item[{\bf (H3)}] There exists $\gamma_2>0$ such that for all $t \in [0,T]$, $x,y \in \R^d$, and
$p>1$,
\begin{equation*}
 \E \, [\Vert u(t,x)-u(t, y)\Vert^p ] \leq c_{p,T} \Vert x-y \Vert^{\gamma_2 p},
\end{equation*}
for some constant $c_{p,T}>0$. In particular, for all $\delta>0$ the trajectories of
$u$ are  $(\gamma_2-\delta)$-H\"older continuous in space.
\end{itemize}

Using Kolmogorov's continuity theorem, Sanz-Sol\'e and Sarr\`a prove in 
\cite{Sanz:02} that if there exists $\epsilon \in (0,1)$ such that
\begin{equation} \label{hypot2}
\int_{\R^d} \frac{\mu(d\xi)}{(1+\Vert \xi \Vert^2)^{\epsilon}} < +\infty,
\end{equation}
then the stochastic heat equation satisfies {\bf (H2)} for all $\gamma_1 \in (0, \frac{1-\epsilon}{2})$,
and {\bf (H3)} for all $\gamma_2 \in (0, 1-\epsilon)$.
In particular, if $f$ is the Riesz kernel, that is $\mu(d\xi)=c_{d,\beta} \Vert \xi \Vert^{\beta-d} d\xi$, $0<\beta<(2\wedge d)$, then {\bf (H2)} holds for all
$\gamma_1 \in (0, \frac{2-\beta}{4})$, and {\bf (H3)}
holds for all $\gamma_2 \in (0, \frac{2-\beta}{2})$. Observe that in this case (\ref{hypot2}) holds for all $\epsilon>\frac{\beta}{2}$.

For the stochastic wave equation in dimensions $d\in\{1,2\}$, one obtains that under condition (\ref{hypot2}), {\bf (H2)} and {\bf (H3)} are satisfied for all $\gamma_1, \gamma_2 \in (0, 1-\epsilon)$, that is, $\gamma_1, \gamma_2 \in (0, \frac{2-\beta}{2})$
for the Riesz kernel case. 
The two-dimensional case was studied by Millet and Sanz-Sol\'e in 
\cite[Proposition 1.4]{Millet:99}. See \cite{Walsh:86} for the one dimensional case.

A different approach based on Sobolev embedding theorem is needed to handle the stochastic wave equation in dimension three.
This was done by Dalang and Sanz-Sol\'e in \cite{Dalang:07} for the case where the spatially homogeneous covariance is defined as a product of a Riesz kernel and a smooth function. In particular, when it is a Riesz kernel, they show that {\bf (H2)} and {\bf (H3)} are satisfied with $\gamma_1, \gamma_2 \in (0, \frac{2-\beta}{2})$.

\section{Existence and smoothness of the density}

We are now interested in proving existence and smoothness of the density of the solution to the system (\ref{equa1})
on the set where $\{ \sigma_1,...,\sigma_q\}$ span $\R^k$.
As is well known, the Malliavin calculus provides a powerful tool in order to show this kind of results for solutions to SDEs and SPDEs. Let us first recall some existing results for single equations of the type (\ref{equa1}).
For the existence and regularity of the density of the wave equation with $d=1$ one refers to the work by Carmona and D.Nualart in \cite{Carmona:88}.
The wave equation in spatial dimension $2$ was studied by Millet and Sanz-Sol\'e in \cite{Millet:99}. The three dimensional case was then handled by Quer-Sardanyons and Sanz-Sol\'e, see \cite{Quer:04} and \cite{Sanz:04}.
The case of the stochastic heat equation in any space dimension can be found in \cite{Marquez:01}.
All these results where unified and generalized by D.Nualart and Quer-Sardanyons in \cite{Nualart:07}. They assumed the following condition.
\begin{itemize}
\item[{\bf (H4)}] There exists $\eta>0$ such that for all $\tau \in [0,1]$,
\begin{equation*}
c \tau^{\eta} \leq \int_0^{\tau} \int_{\R^d} \vert
\mathcal{F}\Gamma(t) (\xi) \vert^2 \mu(d \xi) dt,
\end{equation*}
for some constant $c>0$.
\end{itemize}
Then they show that if $\sigma, b$ are $\mathcal{C}^{\infty}$ functions with bounded derivatives of order greater than or equal to one, and $\vert \sigma \vert \geq c >0$, under conditions {\bf (H1)} and {\bf (H4}), for all $(t,x) \in ]0,T] \times \R^d$, the law of $u(t,x)$ has a $\mathcal{C}^{\infty}$ density.

Observe that condition {\bf (H4)} is satisfied for the stochastic heat equation for any $\eta
\geq 1$ (see \cite[Remark 6.3]{Nualart:07}), and with
$\eta=3$ for the stochastic wave equation in dimensions $1,2,3$ (see \cite[(A.3)]{Quer:04}).

The proof of this result uses tools of Malliavin calculus. One needs to check first that the random variable $u(t,x)$ is smooth in the Malliavin sense. This can be easily generalized to systems of equations (see Proposition \ref{derivat} below).
Secondly, one needs to show that the Malliavin matrix has inverse moments of all orders. For this, one uses condition
{\bf (H4)} and the non degeneracy condition on $\sigma$. The extension of this result to systems of equations is more delicate
as ones deals with a matrix $\sigma$ instead of a function, 
and because of that reason we need to consider the following additional assumptions on $\Gamma$.
\begin{itemize}
\item[{\bf (H5)}] Conditions {\bf (Hi)}, $i=1,2,3$ are satisfied. 
Moreover, the nonnegative measure $\Psi$ defined as $\Vert x \Vert^{\gamma_2} \Gamma(t,dx)$
satisfies that
\begin{equation*}
\int_0^T \int_{\R^d} \vert \mathcal{F}\Psi(t) (\xi) \vert^2 \mu(d
\xi) dt<+\infty.
\end{equation*}
Moreover, there exist $\alpha_1,\alpha_2>0$, $\gamma_2<\alpha_1$,
$\gamma_1<\alpha_2$, such that for all $\tau \in [0,1]$,
\begin{equation*}
\int_{0}^{\tau} \langle \Psi(r,\ast), \Gamma(r,\ast) \rangle_{\mathcal{H}}  dr\leq c \tau^{\alpha_1},
\end{equation*}
for some constant $c>0$, and
\begin{equation*}
\int_0^{\tau} r^{\gamma_1} \int_{\R^d} \vert \mathcal{F}\Gamma(r)
(\xi) \vert^2 \mu(d \xi) dr \leq c \tau^{\alpha_2},
\end{equation*}
for some constant $c>0$. 

\item[{\bf (H6)}] Conditions {\bf (H4)} and {\bf (H5)} are satisfied, and $\alpha:=\alpha_1 \wedge \alpha_2 >\eta$.
\end{itemize}

We will then show the following result.
\begin{thm} \label{maint0}
Assume hypothesis {\bf (H6)} and that $\sigma, b$ are $\mathcal{C}^{\infty}$ functions with bounded partial derivatives of order greater than or equal to one.
Then for all $(t,x) \in
]0,T] \times \R^d$, the law of the random vector $u(t,x)$ admits a
$\mathcal{C}^{\infty}$ density on the open subset of
$\R^{k}$ $\Sigma:=\{y \in \R^{k}: \sigma_1(y),...,\sigma_q(y) \text{ span } \R^k\}$. That is, there exists a function
$p_{t,x} \in \mathcal{C}^{\infty}(\Sigma; \R)$ such that for every
bounded and continuous function $f:\R^{k} \mapsto \R$ with support
contained in $\Sigma$,
\begin{equation*}
\E[ f(u(t,x))] =\int_{\R^{k}} f(y) \, p_{t,x}(y) dy.
\end{equation*}
\end{thm}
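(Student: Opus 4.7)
The plan is to adapt the Malliavin calculus proof from \cite{Nualart:07}, which handles the scalar case, to the vector setting of \eqref{equa1}, with a localization to handle the fact that the non-degeneracy of $\sigma$ only holds on $\Sigma$. Writing $F = u(t,x)$, I will establish two ingredients: (i) $F$ belongs to every Malliavin-Sobolev space $\mathbb{D}^{N,p}$, which follows from differentiating the mild formulation \eqref{equa2} to obtain a linear SPDE for $D^\ell_{s,z}u_i(t,x)$ driven by $\Gamma(t-s,x-z)\sigma_{i\ell}(u(s,z))$ and applying Picard-type estimates as in the proof of \eqref{supu} (this is the content of Proposition \ref{derivat}); and (ii) for every compact $K\subset \Sigma$, the Malliavin matrix $\gamma_F=(\langle DF_i,DF_j\rangle_{\cH^q_T})_{1\le i,j\le k}$ satisfies
\begin{equation*}
\1_{\{F\in K\}}(\det \gamma_F)^{-1} \in \bigcap_{p\ge 1} L^p(\Omega).
\end{equation*}
A Bouleau--Hirsch-type criterion in localized form (integration by parts against test functions supported in $K$, then letting $K$ exhaust the open set $\Sigma$) will then yield the claimed $\mathcal{C}^\infty$ density $p_{t,x}$ on $\Sigma$.

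The analytical heart of the argument is (ii). For $\tau \in (0,t]$ small, I isolate the portion of $\gamma_F$ coming from the time window $[t-\tau,t]$. On that window, $D^\ell_{s,z}u_i(t,x) = \Gamma(t-s,x-z)\sigma_{i\ell}(u(s,z)) + r^{(\ell)}_{s,z;i}$, where the remainder $r$ is given by an integral against $\Gamma$ of Malliavin derivatives of lower order. Expanding $\xi^T\gamma_F\xi$ for a unit vector $\xi\in\R^k$, freezing $\sigma(u(s,z))$ at $\sigma(u(t,x))$, and using the Fourier representation of the inner product in $\cH$, the principal term is
\begin{equation*}
\sum_{\ell=1}^q\bigl(\xi\cdot \sigma_\ell(F)\bigr)^2\int_0^\tau \int_{\R^d}|\mathcal{F}\Gamma(r)(\zeta)|^2\mu(d\zeta)\,dr.
\end{equation*}
On $\{F\in K\}$ the spanning hypothesis and continuity of $\sigma$ supply a uniform lower bound $\sum_\ell(\xi\cdot\sigma_\ell(F))^2\ge c_K>0$, and (H4) then bounds the principal term below by $c\,c_K\tau^\eta$. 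The freezing error is controlled via the spatial H\"older exponent $\gamma_2$ from (H3) together with the measure $\Psi=\Vert x\Vert^{\gamma_2}\Gamma(t,dx)$ appearing in (H5), yielding a bound of order $\tau^{\alpha_1}$; the contribution of $r$, which inherits the temporal H\"older exponent $\gamma_1$ from (H2), is bounded by $\tau^{\alpha_2}$ via the second integral estimate of (H5). Under (H6), $\alpha_1\wedge\alpha_2 > \eta$, so both errors are strictly of smaller order in $\tau$.

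A standard Chebyshev argument, applied at scale $\tau = \tau(\lambda)$ and relying on high-order moments of $F$ and its Malliavin derivatives (all finite by (i)), then yields, for every $N\ge 1$,
\begin{equation*}
\P\Bigl(\inf_{\Vert\xi\Vert=1}\xi^T\gamma_F\xi < \lambda,\ F\in K\Bigr)\le C_{K,N}\,\lambda^N,
\end{equation*}
from which (ii) follows. The main obstacle, in my view, is the passage from the scalar non-degeneracy condition $|\sigma|\ge c>0$ of \cite{Nualart:07} to its matricial analogue: one must track the remainder $r$ vectorially and uniformly in the direction $\xi$, which is precisely why the paper introduces the auxiliary measure $\Psi$ and the combined hypothesis (H5)--(H6) coupling the H\"older exponents of $u$ with the small-time decay of $\int|\mathcal{F}\Gamma|^2\,d\mu$. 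Once the integrability of $(\det\gamma_F)^{-1}\1_{\{F\in K\}}$ is established, the smoothness of $p_{t,x}$ on $\Sigma$ follows from the iterated integration-by-parts formula applied to test functions supported in $K$.
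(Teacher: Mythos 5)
Your proposal is essentially the same argument as the paper's: differentiability from Proposition \ref{derivat}, a localized absolute-continuity criterion (the paper uses \cite[Theorem 3.1]{Bally:98} with the increasing sets $\Sigma_m$; your exhaustion by compacts $K\subset\Sigma$ amounts to the same thing), a lower bound $\det\gamma_F\geq(\inf_\xi\xi^T\gamma_F\xi)^k$, localization of the Malliavin derivative to a small time window, freezing of $\sigma(u(s,z))$ at $u(t,x)$ to expose the spanning hypothesis, control of the freezing error via H\"older continuity and {\bf (H5)}, and the final balance $\alpha>\eta$ from {\bf (H6)}. The one inaccuracy is your attribution of $\alpha_1$ and $\alpha_2$: in the paper \emph{both} exponents come from the freezing error (Lemma \ref{mes} splits $\|u(t-r,y)-u(t,x)\|$ into a spatial part giving $\alpha_1$ via {\bf (H3)} and $\Psi$, and a temporal part giving $\alpha_2$ via {\bf (H2)}), while the remainder term $r$ (your notation) $=\mathcal{A}_2$ is \emph{not} estimated by a H\"older argument at all; it is bounded by $g(\epsilon)^{2p}+\epsilon^pg(\epsilon)^p$ using moment bounds on the localized Malliavin derivative from Proposition \ref{derivat} and \cite{Nualart:07}. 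Since that bound is in fact sharper than $\epsilon^{\alpha_2 p}$ under {\bf (H6)}, your conclusion still stands, but as written your decomposition of the error would leave the temporal part of the freezing error unaccounted for, so you would need to reorganize and let Lemma \ref{mes} absorb both H\"older exponents.
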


We next prove an intermediary result that will be needed for the
proof of Theorem \ref{maint0}.
\begin{lem} \label{mes}
Assume hypothesis {\bf
(H5)}. Then for all $t \in [0,T]$, $x \in \R^d$,
$\epsilon \in (0,1]$, and $p>1$,
\begin{equation*}
\E \biggl[\bigg\vert \int_{0}^{\epsilon} \bigg\langle (\sigma_{ij}(u(t-r,\ast))-\sigma_{ij}(u(t,x)))
\Gamma(r, x-\ast), \Gamma(r,x-\ast) \bigg\rangle_{\mathcal{H}}  dr\bigg\vert^p \biggr] \leq c_{p,T} \epsilon^{\alpha p},
\end{equation*}
for some constant $c_{p,T}>0$, where $\alpha:=\alpha_1 \wedge \alpha_2 $ and $\alpha_1, \alpha_2$ are the parameters in
hypothesis {\bf (H5)}.
\end{lem}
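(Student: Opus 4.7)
The plan is to apply Minkowski's integral inequality, use the Lipschitz property of $\sigma_{ij}$ together with the H\"older bounds {\bf (H2)} and {\bf (H3)}, and recognize the two resulting terms as exactly the quantities controlled by {\bf (H5)}.

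First I would observe that the integrand inside the absolute value is, via the definition of $\langle\cdot,\cdot\rangle_{\mathcal{H}}$,
\begin{equation*}
\int_0^{\epsilon}\!\int_{\R^d}\!\int_{\R^d}\bigl[\sigma_{ij}(u(t-r,y))-\sigma_{ij}(u(t,x))\bigr]\,\Gamma(r,x-dy)\,f(y-z)\,\Gamma(r,x-dz)\,dr,
\end{equation*}
and that the tensor measure $\Gamma(r,x-dy)\,f(y-z)\,\Gamma(r,x-dz)\,dr$ is nonnegative since $f\ge 0$ and $\Gamma$ is a nonnegative measure by {\bf (H1)}. Minkowski's integral inequality then gives
\begin{equation*}
\Bigl\|\,\text{integral}\,\Bigr\|_p\le \int_0^\epsilon\!\int\!\int\bigl\|\sigma_{ij}(u(t-r,y))-\sigma_{ij}(u(t,x))\bigr\|_p\,\Gamma(r,x-dy)\,f(y-z)\,\Gamma(r,x-dz)\,dr.
\end{equation*}

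Next, by Lipschitz continuity of $\sigma_{ij}$ followed by the triangle inequality and the hypotheses {\bf (H2)}--{\bf (H3)},
\begin{equation*}
\bigl\|\sigma_{ij}(u(t-r,y))-\sigma_{ij}(u(t,x))\bigr\|_p\le C\bigl(\|u(t-r,y)-u(t-r,x)\|_p+\|u(t-r,x)-u(t,x)\|_p\bigr)\le C\bigl(\|y-x\|^{\gamma_2}+r^{\gamma_1}\bigr).
\end{equation*}
(For $r>t$ the vanishing initial condition and \textnormal{(\ref{supu})} yield the same kind of bound, so I may assume $t\ge\epsilon$ without loss.) This splits the estimate into two pieces.

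For the first piece, the change of variables $u=x-y$, $v=x-z$ together with the symmetry $f(u-v)=f(v-u)$ and the definition $\Psi(r,du)=\|u\|^{\gamma_2}\Gamma(r,du)$ from {\bf (H5)} transforms it into
\begin{equation*}
C\int_0^\epsilon\!\int\!\int \Psi(r,du)\,f(u-v)\,\Gamma(r,dv)\,dr=C\int_0^\epsilon\langle\Psi(r,\ast),\Gamma(r,\ast)\rangle_{\mathcal{H}}\,dr\le C\epsilon^{\alpha_1},
\end{equation*}
the last inequality being precisely the first estimate in {\bf (H5)}. For the second piece, the translation invariance of $\langle\cdot,\cdot\rangle_{\mathcal{H}}$ and Plancherel give
\begin{equation*}
C\int_0^\epsilon r^{\gamma_1}\|\Gamma(r,x-\ast)\|_{\mathcal{H}}^2\,dr=C\int_0^\epsilon r^{\gamma_1}\int_{\R^d}|\mathcal{F}\Gamma(r)(\xi)|^2\mu(d\xi)\,dr\le C\epsilon^{\alpha_2},
\end{equation*}
which is the second estimate in {\bf (H5)}. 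Summing, $\|\cdot\|_p\le C(\epsilon^{\alpha_1}+\epsilon^{\alpha_2})\le C\epsilon^{\alpha_1\wedge\alpha_2}=C\epsilon^\alpha$, and raising to the $p$-th power yields the lemma.

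The main technical point to be careful about is the Minkowski step, which requires recognizing that the compound object $\Gamma(r,x-dy)f(y-z)\Gamma(r,x-dz)\,dr$ is a genuine positive measure on $[0,\epsilon]\times\R^d\times\R^d$ (so that $\|\cdot\|_p$ pulls inside). Once that is set up cleanly, everything else is a matching of the two resulting terms with the two displayed inequalities in {\bf (H5)}; no further subtlety arises.
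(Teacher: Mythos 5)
Your proof is correct and follows essentially the same route as the paper: expand the $\mathcal{H}$-inner product as a double integral against the nonnegative measure $\Gamma(r,x-dy)f(y-z)\Gamma(r,x-dz)\,dr$, apply Minkowski's integral inequality, use the Lipschitz property of $\sigma_{ij}$ together with {\bf (H2)}--{\bf (H3)} to produce the $\|x-y\|^{\gamma_2}$ and $r^{\gamma_1}$ terms, and then invoke the two displayed bounds in {\bf (H5)}. The only (harmless) difference is that you make the triangle-inequality split of the increment and the change of variables to match $\Psi$ explicit, whereas the paper leaves them implicit.
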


\begin{proof}
Set $G(r,dy):=
(\sigma_{ij}(u(t-r,y))-\sigma_{ij}(u(t,x)))\Gamma(r,x-dy)$. Then by \cite[Proposition 3.3]{Nualart:07}, $G(r,dy) \in L^2(\Omega \times [0,T]; \mathcal{H})$, and following the proof of this Proposition, 
we can write
\begin{equation*}
\langle G(r,x-\ast), \Gamma(r,x-\ast) \rangle_{\mathcal{H}}  =
\int_{\R^d} \int_{\R^d}  G(r,x-dy) f(y-z)  \Gamma(r,x-dz).
\end{equation*}
Using Minkowski's inequality with respect to the finite measure
$\Gamma(r,x-dy)\Gamma(r,x-dz)f(y-z)dr,$ together with the
Lipschitz property of $\sigma$, we get that for all $p>1$,
\begin{equation*}
\begin{split}
&\E \biggl[\bigg\vert \int_{0}^{\epsilon} \langle G(r,x-\ast), \Gamma(r,x-\ast) \rangle_{\mathcal{H}}  dr\bigg\vert^p \biggr] \\
&\leq c_{p} \bigg\vert \int_{0}^{\epsilon} \int_{\R^d} \int_{\R^d}  \Gamma(r,x-dy) (\E \, [\Vert u(t-r,y)-u(t, x)\Vert^p ])^{1/p} \Gamma(r,x-dz) f(y-z)dr\bigg\vert^p.
\end{split}
\end{equation*}
Therefore, appealing to hypotheses {\bf (H2)} and {\bf (H3)}, we
find that the last term is bounded by
\begin{equation*} \begin{split}
&c_{p,T} \biggl\{ \bigg\vert \int_{0}^{\epsilon} \int_{\R^d} \int_{\R^d}
\Gamma(r,x-dy) \Vert x-y \Vert^{\gamma_2} \Gamma(r,x-dz) f(y-z) dr \bigg\vert^p \\
& +\bigg\vert \int_{0}^{\epsilon} \int_{\R^d} \int_{\R^d}  \Gamma(r,x-dy) r^{\gamma_1} \Gamma(r,x-dz) f(y-z) dr \biggr\vert^p \biggr\} \\
&= c_{p,T} \biggl\{ \bigg\vert \int_{0}^{\epsilon} \langle
\Psi(r,\ast), \Gamma(r,\ast) \rangle_{\mathcal{H}}
dr\bigg\vert^p+\bigg\vert \int_{0}^{\epsilon} r^{\gamma_1}
\int_{\R^d} \vert \mathcal{F} \Gamma(r)(\xi) \vert^2  \mu(d\xi)
dr \biggr\vert^p \biggr\}.
\end{split}
\end{equation*}
Hence, using hypothesis  {\bf (H5)}, we conclude that
\begin{equation*}
\E \biggl[\bigg\vert \int_{0}^{\epsilon} \langle G(r,x-\ast), \Gamma(r,x-\ast) \rangle_{\mathcal{H}}  dr\bigg\vert^p \biggr] 
\leq c_{p,T} (\epsilon^{\alpha_1 p}+\epsilon^{\alpha_2 p}) \leq c_{p,T} \epsilon^{\alpha p}.
\end{equation*}
\end{proof}

Next, we recall some elements of Malliavin calculus.
Consider the Gaussian family $\{W(h), h  \in \mathcal{H}^q_T\}$
defined in Section 2, that is, a centered Gaussian process such that
$\E[W(h)W(g)]=\langle h,g \rangle_{\mathcal{H}^q_T}$. Then, we can use
the differential Malliavin calculus based on it (see for instance
\cite{Nualart:06}). We denote the Malliavin derivative by
$D=(D^{(1)},...,D^{(q)})$, which is an operator in $L^2(\Omega;
\mathcal{H}^q_T)$. For any $m \geq 1$, the domain of the iterated
derivative $D^m$ in $L^p(\Omega; (\mathcal{H}^q_T)^{\otimes m})$ is
denoted by $\mathbb{D}^{m,p}$, for any $p \geq 2$. We set
$\mathbb{D}^{\infty}= \cap _{p \geq 1} \cap_{m \geq 1}
\mathbb{D}^{m,p}$. Recall that for any differentiable random
variable $F$ and any $r=(r_1,...,r_m) \in [0,T]^m$, $D^m F(r)$ is an
element of $\mathcal{H}^{\otimes m}$, which will be denoted by
$D^m_r F$. We define the Malliavin matrix of $F \in
(\mathbb{D}^{\infty})^{m}$ by $\gamma_F=(\langle DF_i, D F_j
\rangle_{\mathcal{H}^q_T})_{1\leq i,j\leq m}$.

The next result is the $q$-dimensional extension of
\cite[Proposition 6.1]{Nualart:07} (see also \cite[Theorem 1]{Sanz:04}). Its proof follows exactly along
the same lines working coordinate by coordinate, and is therefore
omitted.
\begin{prop} \label{derivat}
Assume that {\bf (H1)} holds, and that $\sigma, b$ are $\mathcal{C}^{\infty}$ functions with bounded partial derivatives of order greater than or equal to one.
Then, for every $(t,x) \in (0,T] \times \R^d$, the random variable
$u_i(t,x)$ belongs to the space $\mathbb{D}^{\infty}$, for all
$i=1,...,k$. Moreover, the derivative $D u_i(t,x)=(D^{(1)}
u_i(t,x),...,D^{(q)} u_i(t,x))$ is an $\mathcal{H}^q_T$-valued
process that satisfies the following linear stochastic differential
equation, for all $i=1,...,k$:
\begin{equation} \label{eqder}
 \begin{split}
 D^{(j)}_r u_i(t,x) &= \sigma_{ij}(u(r, \ast)) \Gamma(t-r, x-\ast) \\
&+\int_r^t \int_{\R^d} \Gamma (t-s, x-y) \sum_{\ell=1}^{q} D^{(j)}_r (\sigma_{i \ell}(u(s,y)) W^{\ell}(ds, dy) \\
&+\int_r^t \int_{\R^d} \Gamma (t-s, dy) D^{(j)}_r (b_i(u(s,x-y)) ds,
 \end{split}
\end{equation}
for all $r\in [0,t]$, and is $0$ otherwise. Moreover, for all $p
\geq 1$, $m\geq 1$ and $i=1,...,k$, it holds that
\begin{equation} \label{num2}
\sup_{(t,x) \in (0,T] \times \R^d} \E \biggl[\Vert D^m u_i(t,x)
\Vert^p_{(\mathcal{H}^{q}_T)^{\otimes m}} \biggr] < +\infty
\end{equation}
\end{prop}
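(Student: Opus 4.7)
The plan is to mirror the single-equation Malliavin calculus argument of \cite[Proposition 6.1]{Nualart:07} component by component, using a Picard iteration combined with a double induction on the iteration index $n$ and the differentiation order $m$.

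First, I would set $u^0 \equiv 0$ and define the Picard scheme
\begin{equation*}
u^{n+1}_i(t,x) = \sum_{j=1}^q \int_0^t \int_{\R^d} \Gamma(t-s, x-y) \sigma_{ij}(u^n(s,y)) W^j(ds,dy) + \int_0^t \int_{\R^d} b_i(u^n(t-s,x-y)) \Gamma(s,dy) ds.
\end{equation*}
Under {\bf (H1)}, standard $L^p$ estimates using the $\mathcal{H}^q_T$-isometry recalled in Section 2, Burkholder's inequality, and the Lipschitz property of $\sigma, b$ give $\sup_n \sup_{(t,x) \in [0,T] \times \R^d} \E[\|u^n(t,x)\|^p] < \infty$, and the convergence $u^n \to u$ in $L^p(\Omega)$, uniformly in $(t,x)$.

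Next, I would prove by induction on $m \geq 1$ that, uniformly in $n$, each $u^n_i(t,x)$ belongs to $\mathbb{D}^{m,p}$ for every $p \geq 2$, with
\begin{equation*}
\sup_{n} \sup_{(t,x) \in [0,T] \times \R^d} \E\bigl[\|D^m u^n_i(t,x)\|^p_{(\mathcal{H}^q_T)^{\otimes m}}\bigr] < \infty.
\end{equation*}
Applying the chain rule to $\sigma_{ij}(u^n)$ and $b_i(u^n)$ (valid since the partial derivatives are bounded) and commuting $D$ with the stochastic and pathwise integrals, one obtains a linear recurrence for $D^{(j)}_r u^{n+1}_i(t,x)$ whose leading term is $\sigma_{ij}(u^n(r,\ast))\Gamma(t-r,x-\ast)$ and whose higher-order terms involve $D u^n$. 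The moment bound is then obtained by iterating Burkholder's inequality in the tensor Hilbert space $(\mathcal{H}^q_T)^{\otimes m}$ together with Minkowski's inequality, and closing the estimate with a Gronwall-type argument in which the auxiliary function $\varphi_n(t) = \sup_{x,s \leq t} \E[\|D^m u^n_i(s,x)\|^p]$ is controlled by an integral against the finite measure $\|\mathcal{F}\Gamma(t-\cdot)\|^2_{L^2(\mu)} dt$ coming from {\bf (H1)}. Finally, the closability of $D$, the uniform bounds above, and the $L^p$ convergence $u^n \to u$ allow one to extract a weak limit of $D^m u^n_i(t,x)$ in $L^p(\Omega; (\mathcal{H}^q_T)^{\otimes m})$ which must coincide with $D^m u_i(t,x)$; passing to the limit in the recurrence yields (\ref{eqder}), and lower semicontinuity of the norm under weak convergence produces (\ref{num2}).

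The main technical obstacle is justifying the interchange of the Malliavin derivative with the stochastic integral when the integrand is genuinely measure-valued in the space variable via $\Gamma$. One has to first apply the identity to smooth approximations $\Gamma_\epsilon \in \mathcal{C}^\infty_0$ with $\Gamma_\epsilon \to \Gamma$ in $\mathcal{H}^q_T$, for which the computation reduces to the classical Walsh framework, and then pass to the limit using the $\mathcal{H}^q_T$-isometry from Section 2 together with the closability of $D$. The vector-valued system structure does not add new difficulties: the induction is carried out coordinate by coordinate, and the coupling between components enters only through $\nabla \sigma_{ij}$ and $\nabla b_i$, which are bounded by hypothesis.
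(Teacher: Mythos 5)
The paper omits this proof, simply noting that it is the coordinate-by-coordinate extension of \cite[Proposition 6.1]{Nualart:07}; your proposal correctly reconstructs that reference's argument (Picard iteration, induction on the order $m$ of Malliavin differentiation, uniform $\mathbb{D}^{m,p}$ bounds via Burkholder, Minkowski, and Gronwall under {\bf (H1)}, then closability of $D$ to pass to the limit), together with the smoothing of $\Gamma$ to justify commuting $D$ with the measure-valued stochastic integral. This is essentially the same approach the paper points to, applied componentwise to the system as the paper indicates.
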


\begin{remark}
Recall that the iterated derivative of $u_i(t,x)$ satisfies also the
$q$-dimensional extension of equation \cite[(6.27)]{Nualart:07}.
\end{remark}

Recall that the stochastic integral on the right hand-side of (\ref{eqder}) must be understood by means of a Hilbert-valued
stochastic integral (see \cite[Section 3]{Nualart:07}) and the Hilbert-valued pathwise integral of (\ref{eqder})
is defined in \cite[Section 5]{Nualart:07}.
\vskip 12pt

In order to prove Theorem \ref{maint0}, we will use the following localized variant of Malliavin's absolute continuity theorem.
\begin{thm} \textnormal{\cite[Theorem 3.1]{Bally:98}} \label{loc}
Let $\Sigma_m \subset \R^{k}$, $m \in \mathbb{N}$, $m \geq1$, be a
sequence of open sets such that $\bar{\Sigma}_m \subset
\Sigma_{m+1}$ and let $F \in (\mathbb{D}^{\infty})^{k}$ such that
for any $q > 1$ and $m \in \mathbb{N}$,
\begin{equation} \label{expe}
\E  \bigl[( \textnormal{det} \, \gamma_F)^{-q} \1_{\{F \in \Sigma_m\}}\bigr] < +\infty.
\end{equation}
Then the law of $F$ admits a $\mathcal{C}^{\infty}$ density on the set $\Sigma=\cup_m \Sigma_m$.
\end{thm}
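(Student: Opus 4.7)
The plan is to reduce Theorem~\ref{loc} to the classical global Malliavin criterion for smoothness of densities by means of a sequence of smooth cutoffs tailored to the sets $\Sigma_m$. Since $\bar\Sigma_m\subset\Sigma_{m+1}$, for each $m\geq 1$ I would first choose a function $\chi_m\in\mathcal{C}^{\infty}_0(\Sigma_{m+1};[0,1])$ with $\chi_m\equiv 1$ on a neighborhood of $\bar\Sigma_m$. The key point is that on the random event $\{\chi_m(F)\neq 0\}$ one has $F\in\Sigma_{m+1}$, so hypothesis (\ref{expe}), applied with index $m+1$, yields $\E[\chi_m(F)^p(\det\gamma_F)^{-q}]<+\infty$ for every $p,q\geq 1$.

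Next I would introduce the regularised inverse Malliavin matrix $A_m:=\chi_m(F)(\det\gamma_F)^{-1}\mathrm{com}(\gamma_F)$, where $\mathrm{com}$ denotes the cofactor matrix; $A_m$ coincides with $\chi_m(F)\gamma_F^{-1}$ wherever the inverse exists, and its entries belong to $\bigcap_{p\geq 1}L^p(\Omega)$, because $F\in(\mathbb{D}^\infty)^k$ gives $L^p$-moments of arbitrary order for $\mathrm{com}(\gamma_F)$ while the cutoff controls the singular factor $(\det\gamma_F)^{-1}$. Similarly, the iterated Malliavin derivatives $D^j(\chi_m(F))$ lie in $L^p((\mathcal{H}^q_T)^{\otimes j})$ for every $p\geq 1$ by the chain rule together with the compact support of $\chi_m$ and the uniform boundedness of its partial derivatives.

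With $A_m$ in hand I would invoke the standard Malliavin integration-by-parts machinery (see \cite{Nualart:06}). For any test function $\phi\in\mathcal{C}^{\infty}_0(\Sigma_m)$ one has $\chi_m(F)\phi(F)=\phi(F)$ almost surely, and iterating the IBP formula produces, for every multi-index $\alpha$, a weight $H_\alpha^{(m)}\in\bigcap_{p\geq 1}L^p(\Omega)$ with
\[
\E[\partial^\alpha\phi(F)]=\E[\phi(F)\,H_\alpha^{(m)}],
\]
where $H_\alpha^{(m)}$ is a polynomial expression in the entries of $A_m$, of the Malliavin derivatives of $F$, and of $D^j(\chi_m(F))$. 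Taking $\alpha=0$ shows that the restriction $\1_{\{F\in\Sigma_m\}}\,\P\circ F^{-1}$ is absolutely continuous with some density $p^{(m)}$, and the higher-order identities combined with Sobolev embedding on $\R^k$ upgrade this to $p^{(m)}\in\mathcal{C}^{\infty}(\Sigma_m)$. Because the $\Sigma_m$ are nested and $\Sigma=\bigcup_m\Sigma_m$, the densities $p^{(m)}$ agree on overlaps and glue into a single $p\in\mathcal{C}^{\infty}(\Sigma)$ with the required property.

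The main obstacle is controlling all of the terms appearing in $H_\alpha^{(m)}$ simultaneously after several IBP steps: powers of $\gamma_F^{-1}$ multiplied by high-order iterated Malliavin derivatives of $F$ must remain jointly integrable, and one must verify that each differentiation with respect to the chain rule does not destroy this integrability. The cutoff construction is designed precisely so that every occurrence of $\gamma_F^{-1}$ is accompanied by a factor supported in $\{F\in\Sigma_{m+1}\}$, where by hypothesis (\ref{expe}) the inverse determinant has all negative moments; combined with the Leibniz rule for $D$ and the global $L^p$-bounds implied by $F\in(\mathbb{D}^\infty)^k$, this closes the estimate.
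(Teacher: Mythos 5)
The paper does not reprove this statement---it simply cites Bally--Pardoux---and your proposal reconstructs essentially the same localization argument used there: conjugate the Malliavin integration-by-parts weights by a smooth cutoff so that every occurrence of $\gamma_F^{-1}$ is accompanied by a factor supported in $\{F\in\Sigma_{m+1}\}$, where hypothesis (\ref{expe}) supplies negative moments of $\det\gamma_F$ of every order. The choice of the regularised inverse $A_m$, the observation that the weights after iterating the integration by parts lie in $\bigcap_{p\geq1}L^p(\Omega)$, and the upgrade to $\mathcal{C}^\infty$ regularity via the standard criterion are all sound and faithful to the cited reference.

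One step as written would fail, however. You posit $\chi_m\in\mathcal{C}^{\infty}_0(\Sigma_{m+1};[0,1])$ with $\chi_m\equiv 1$ on a neighborhood of $\bar{\Sigma}_m$; such a compactly supported function cannot exist when $\bar{\Sigma}_m$ is unbounded, and the sets $\Sigma_m$ relevant to this paper (those in (\ref{ga})) are typically unbounded. Replacing $\chi_m$ by a non-compactly-supported smooth cutoff equal to $1$ on $\bar{\Sigma}_m$ is not a remedy either, since nothing forces its derivatives to be bounded, which your $L^p$ estimates on $D^j(\chi_m(F))$ require. The fix is to localize around the test function rather than around all of $\Sigma_m$: given $\phi\in\mathcal{C}^\infty_0(\Sigma_m)$, its support is compact and contained in $\Sigma_m$, so one may choose $\chi\in\mathcal{C}^\infty_0(\Sigma_m)$ with $\chi\equiv 1$ on a fixed compact neighborhood $K$ of $\mathrm{supp}\,\phi$ and run the integration-by-parts argument with this $\chi$ in place of $\chi_m$, obtaining $\vert\E[\partial^\alpha\phi(F)]\vert\leq C_\alpha(\chi)\Vert\phi\Vert_\infty$ uniformly over $\phi$ supported in the interior of $K$. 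This yields a $\mathcal{C}^\infty$ density on the interior of $K$, and since each point of $\Sigma$ lies in the interior of some such $K$, the local densities agree on overlaps and glue to a global $\mathcal{C}^\infty$ density on $\Sigma$. With this modification your argument is complete and matches the approach of the cited result.
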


\noindent {\it Proof of Theorem \ref{maint0}.}
For each $m \in \mathbb{N}$, $m \geq 1$, define the open set
\begin{equation} \label{ga}
\Sigma_m=\biggl\{y \in \R^{k}: \sum_{j=1}^q \langle \sigma_j(y), \xi \rangle^2 > \frac{1}{m}, \, \forall \, \xi \in \mathbb{R}^{k}, \Vert
\xi \Vert=1\biggr\}.
\end{equation}
By Proposition \ref{derivat}, it suffices to prove that condition
(\ref{expe}) of Theorem \ref{loc} holds true in each $\Sigma_m$.
Indeed, observe that $\Sigma=\cup_m \Sigma_m=\{\sigma_1,...,\sigma_q \text{ span } \R^k \}$.

Let $(t,x) \in ]0,T] \times \R^d$ and $m \geq 1$ be fixed. It suffices to prove that there exists
$\delta_0(m)>0$ such that for all $0<\delta \leq
\delta_0$, and all $p>1$,
\begin{equation} \label{conclusion}
\P \biggl\{  (\textnormal{det} \, \gamma_{u(t,x)}< \delta) \1_{\{u(t,x) \in \Sigma_m\}} \biggr\} \leq C \delta^{\lambda p},
\end{equation}
for some $\lambda>0$, and for some constant $C>0$ not depending on $\delta$. This implies (\ref{expe}), taking $p=\frac{q}{\lambda}+1$
in (\ref{conclusion}).

We write
\begin{equation} \label{det}
\textnormal{det} \, \gamma_{u(t,x)} \geq \biggl(
\textnormal{inf}_{\xi \in \mathbb{R}^{k}:  \Vert \xi \Vert =1}
(\xi^{T} \gamma_{u(t,x)} \xi)\biggr)^{k}.
\end{equation}
Let $\xi \in \mathbb{R}^{k}$ with $\Vert \xi \Vert=1$, and fix
$\epsilon \in (0,1]$. The inequality
\begin{equation*} 
\Vert a+b \Vert_{\mathcal{H}^{q}}^2 \geq \frac{2}{3} \Vert a
\Vert_{\mathcal{H}^{q}}^2-2 \Vert b \Vert_{\mathcal{H}^{q}}^2,
\end{equation*}
together with (\ref{eqder}), gives
\begin{equation*}
\xi^{T}  \gamma_{u(t,x)}  \xi \geq \int_{t-\epsilon}^t
 \bigg\Vert \sum_{i=1}^{k} D_{r}(u_i(t,x)) \xi_i \bigg\Vert_{\mathcal{H}^{q}}^2 dr \geq  \frac{2}{3} \mathcal{A}_1 -2 \mathcal{A}_2,
\end{equation*}
where
\begin{equation*} \begin{split}
\mathcal{A}_{1}&=\sum_{j=1}^{q} \int_{t-\epsilon}^t
 \Vert \langle \sigma_{j}(u(r, \ast)), \xi \rangle  \Gamma(t-r, x-\ast)\Vert_{\mathcal{H}^{q}}^2 dr, \\
\mathcal{A}_2&=\int_{t-\epsilon}^t  \bigg\Vert \sum_{i=1}^{k} a_i(r,t,x,\ast) \,\xi_i \bigg\Vert_{\mathcal{H}^{q}}^2 dr, \\
a_i(r,t,x, \ast)&=  \sum_{\ell=1}^{q}\int_r^t \int_{\R^d} \Gamma (t-s, x-y) D_r (\sigma_{i \ell}(u(s,y)) W^{\ell}(ds, dy) \\
&\qquad +\int_r^t \int_{\R^d}  D_r (b_i(u(s,x-y)) \Gamma (t-s, y) dy ds.
\end{split}
\end{equation*}

Now, assume that $u(t,x) \in \Sigma_m$. Then,
adding and subtracting the term 
$$\mathcal{A}_{1,1}=\sum_{j=1}^{q} \langle \sigma_j(u(t,x)), \xi \rangle^2 \int_{t-\epsilon}^{t}  \int_{\R^d}
\vert \mathcal{F} \Gamma (t-r)(\xi) \vert^2 \mu(d\xi) dr$$ 
we get that $\mathcal{A}_1 \geq 
\mathcal{A}_{1,1} -\vert \mathcal{A}_{1,2} \vert$, where
\begin{equation*} \begin{split}
\mathcal{A}_{1,2}&= \sum_{j=1}^{q} \int_{t-\epsilon}^t \int_{\R^d}
\int_{\R^d} \Gamma(t-r,x-dy) \Gamma(t-r,x-dz) f(y-z)  \\
&\qquad \qquad \times \biggl( \langle \sigma_j(u(r,y)), \xi \rangle
\langle \sigma_j(u(r,z)), \xi \rangle- \langle \sigma_j(u(t,x)), \xi \rangle^2\biggr)\, dr.
\end{split}
\end{equation*}
Note that we have added and subtracted a "local" term to make the ellipticity property  appear (see (\ref{ga})). A similar idea is used in
\cite{Millet:99} for the stochastic wave equation in dimension 2.

Then, using the fact that $u(t,x) \in \Sigma_m$, we get that
\begin{equation*}
\mathcal{A}_{1,1} > \frac{1}{m} \int_0^{\epsilon} \int_{\R^d} \vert \mathcal{F}\Gamma(r) (\xi) \vert^2 \mu(d \xi) dr
=: \frac{1}{m} g(\epsilon).
\end{equation*}

Now we find out upper bounds for the $p$-th moment, $p > 1$, of the
terms $\mathcal{A}_{1,2}$ and $\mathcal{A}_{2}$. We start treating
$\mathcal{A}_{1,2}$. We write
\begin{equation*} \begin{split}
&\langle \sigma_j(u(r,y)), \xi \rangle
\langle \sigma_j(u(r,z)), \xi \rangle- \langle \sigma_j(u(t,x)), \xi \rangle^2\\
&=\langle \sigma_j(u(r,y)), \xi \rangle \, \langle \sigma_j(u(r,z))-\sigma_j(u(t,x)), \xi \rangle  \\
& \qquad \qquad \qquad +\langle \sigma_j(u(t,x)), \xi \rangle\,
\langle \sigma_j(u(r,y))-\sigma_j(u(t,x)), \xi \rangle.
\end{split}
\end{equation*}
Then, proceeding as in the proof of Lemma \ref{mes}, using the
Lipschitz property of the coefficients of $\sigma$, and
(\ref{supu}), it yields that
\begin{equation*}
\E \,  \biggl[\sup_{\xi \in \R^d: \Vert \xi \Vert=1} \vert
\mathcal{A}_{1,2} \vert^p \biggr] \leq c_{p,T} \epsilon^{\alpha
p}.
\end{equation*}

We next treat $\mathcal{A}_{2}$. Using the Cauchy-Schwarz
inequality, for any $p > 1$, it yields that
\begin{equation} \begin{split} \label{num1}
&\E \,  \biggl[\sup_{\xi \in \R^d: \Vert \xi \Vert=1}
\vert \mathcal{A}_2 \vert^p \biggr] \leq c_p \sum_{i=1}^{k} \E \,  \biggl[ \bigg\vert \int_{t-\epsilon}^t   \Vert a_i(r,t,x,\ast) \Vert_{\mathcal{H}^{q}}^2 dr \bigg\vert^p  \biggr] \\
& \qquad \leq c_p \sum_{i=1}^{k} \biggl(\E \,  \biggl[ \bigg\vert
\int_{0}^{\epsilon}  \Vert V_i(r,t,x,\ast) \Vert_{\mathcal{H}^{q}}^2
dr \bigg\vert^p  \biggr] + \E \,  \biggl[ \bigg\vert
\int_{0}^{\epsilon}   \Vert W_i(r,t,x,\ast)
\Vert_{\mathcal{H}^{q}}^2 dr \bigg\vert^p  \biggr]\biggr),
\end{split}
\end{equation}
where
\begin{equation*} \begin{split}
V_i(r,t,x,\ast)&:= \sum_{\ell=1}^{q}\int_{t-r}^t \int_{\R^d} \Gamma
(t-s, x-y)  D_{t-r} (\sigma_{i \ell}(u(s,y))W^{\ell}(ds,
dy),\\W_i(r,t,x,\ast)&:=\int_{t-r}^t \int_{\R^d} \Gamma (t-s, y)
D_{t-r} (b_i(u(s,x-y)) dy ds.
\end{split}
\end{equation*}
Now, using H\"older's inequality, the boundedness of the
coefficients of the derivatives of $\sigma$, and \cite[(3.13),
(5.26)]{Nualart:07}, we get that
\begin{equation*} \begin{split}
\E \,  \biggl[ \bigg\vert \int_{0}^{\epsilon}   \Vert
V_i(r,t,x,\ast) \Vert_{\mathcal{H}^{q}}^2 dr \bigg\vert^p \biggr]
&\leq c_p \epsilon^{p-1} \sup_{(s,y) \in (0,\epsilon) \times \R^d} \E[\Vert D_{t-\cdot} u_i(t-s,y) \Vert^{2p}_{\mathcal{H}_{\epsilon}^{q}}] \\ & \qquad \qquad \qquad \qquad \times \biggl(\int_0^{\epsilon} \int_{\R^d} \vert \mathcal{F}\Gamma(s) (\xi) \vert^2 \mu(d \xi) ds\biggr)^{p} \\
& \leq c_p g(\epsilon)^{2p}.
\end{split}
\end{equation*}
Moreover, using H\"older's inequality, the boundedness of the
partial derivatives of the coefficients of $b$, hypothesis
(\ref{hyp0}), and \cite[(5.17), (5.26)]{Nualart:07},
for the second term in (\ref{num1}) corresponding to the
Hilbert-valued pathwise integral, we obtain that
\begin{equation*} \begin{split}
\E \,  \biggl[ \bigg\vert \int_{0}^{\epsilon} \Vert W_i(r,t,x,\ast)
\Vert_{\mathcal{H}^{q}}^2 dr \bigg\vert^p \biggr]
&\leq c_p \epsilon^{p-1} \sup_{(s,y) \in (0,\epsilon) \times \R^d} \E[\Vert D_{t-\cdot} u_i(t-s,y) \Vert^{2p}_{\mathcal{H}_{\epsilon}^{q}}] \\
& \qquad \qquad \qquad \qquad \times\biggl(\int_0^{\epsilon} \int_{\R^d} \Gamma(s, y) dyds \biggr)\\
& \leq c_p \epsilon^{p}g(\epsilon)^{p}.
\end{split}
\end{equation*}
Hence, we conclude that, for any $p > 1$,
\begin{equation*}
\E \,  \biggl[\sup_{\xi \in \R^d: \Vert \xi \Vert=1} \vert \mathcal{A}_2 \vert^p \biggr] \leq c_p (g(\epsilon)^{2p}+ \epsilon^{p}g(\epsilon)^{p}).
\end{equation*}

Appealing to (\ref{det}), we have proved that, on the set $\{u(t,x) \in \Sigma_m\}$,
\begin{equation} \label{det2}
\textnormal{det} \, \gamma_{u(t,x)} > \biggl(\frac{2}{3 m} g(\epsilon)-I \biggr)^{k},
\end{equation}
where $I$ is a random variable such that for all $p>1$, $\E[\vert I
\vert^p ]\leq (\epsilon^{\alpha p}+g(\epsilon)^{2p}+ \epsilon^{p}g(\epsilon)^{p})$.

We now choose $\epsilon=\epsilon(\delta,m)$ in such a way
that $\delta^{1/k}=\frac{1}{3m} g(\epsilon)$. By hypothesis {\bf (H4)} this implies that $3m \delta^{1/k}\geq c \epsilon^{\eta}$, that is, $\epsilon \leq C \delta^{\frac{1}{\eta k}}$. Then using (\ref{det2}), we
conclude that for all $0<\delta \leq \delta_0$ and $p>1$,
$$
 \P \biggl\{  (\textnormal{det} \, \gamma_{u(t,x)} < \delta) \1_{\{u(t,x) \in \Sigma_m\}} \biggr\} \leq
\frac{\epsilon^{\alpha p}+g(\epsilon)^{2p}+ \epsilon^{p}g(\epsilon)^{p}}{(\frac{2}{3 m} g(\epsilon)-\delta^{1/k})^p} 
\leq C(m) (\delta^{\frac{p}{k}}+\delta^{\frac{p}{\eta k}}+
\delta^{\lambda p}),
$$
with $\lambda=\frac{\alpha-\eta}{\eta k}$. Recall that $\eta<\alpha$ from hypothesis {\bf (H6)}. This
proves (\ref{conclusion}). \hfill $\square$ \vskip 12pt 

We end this section with the verification of condition {\bf (H6)} for the stochastic heat and wave equations
with a Riesz kernel as spatial homogeneous covariance, that is, $f(x)=\Vert x \Vert^{-\beta}$ and $\mu(d\xi)=c_{d,\beta} \Vert \xi \Vert^{d-\beta} d \xi$, $0<\beta <(2 \wedge d)$. 

Consider first the stochastic heat equation in any spatial dimension, that is, 
$$\Gamma(t,x)=(4 \pi t)^{-d/2} \exp \biggl(-\frac{\Vert x \Vert^2}{4t} \biggr), \qquad t \geq 0, x \in \R^d.$$
For all $t \in [0,T]$, using the change variables
$[\tilde{y}=\frac{y}{\sqrt{r}},\ \tilde{z}=\frac{z}{\sqrt{r}}]$,
it yields that
\begin{equation*} \begin{split}
 &\int_0^t
 \int_{\R^d}
\int_{\R^d}  \Vert y-z \Vert^{-\beta} \Gamma(r,y)
\Gamma(r,z) \, dy dz dr \\
& \qquad = \int_0^{t}  r^{-\beta/2}
 \int_{\R^d} \int_{\R^d}   \Vert \tilde{y}- \tilde{z} \Vert^{-\beta} \Gamma(1,\tilde{y})
\Gamma(1,\tilde{z}) d\tilde{y} d\tilde{z} dr = c t^{\frac{2-\beta}{2}}.
\end{split}
\end{equation*}
Thus, hypothesis {\bf (H4)} is satisfied for
$\eta=\frac{2-\beta}{2}$.
Furthermore, hypothesis {\bf (H5)} holds with
$\alpha_1=\frac{2-\beta}{2}+\frac{\gamma_2}{2}$ and
$\alpha_2=\frac{2-\beta}{2}+\gamma_1$. Indeed, using the
change of variables $[\tilde{y}=\frac{y}{\sqrt{r}},
\tilde{z}=\frac{z}{\sqrt{r}}]$, for all $\tau \in [0,1]$, we
have that
\begin{equation*} \begin{split}
 & \int_{0}^{\tau} \int_{\R^d} \int_{\R^d}  \,\Vert z \Vert^{\gamma_2}  \Vert y-z \Vert^{-\beta} \Gamma(r,z)
\Gamma(r,y) dz dy dr \\
& =\int_{0}^{\tau} r^{\frac{\gamma_2}{2}-\frac{\beta}{2}}
\int_{\R^d} \int_{\R^d}  \Vert \tilde{z} \Vert^{\gamma_2} \Vert
\tilde{y}-\tilde{z} \Vert^{-\beta} \Gamma(1,\tilde{z})
\Gamma(1,\tilde{y}) d\tilde{z} d\tilde{y} dr =c
\tau^{\frac{2-\beta}{2}+\frac{\gamma_2}{2}},
      \end{split}
\end{equation*}
and
\begin{equation*} \begin{split}
 & \int_{0}^{\tau} r^{\gamma_1} \int_{\R^d} \int_{\R^d}   \Vert y-z \Vert^{-\beta} \Gamma(r,z)
\Gamma(r,y) dz dy dr \\
& =\int_{0}^{\tau} r^{\gamma_1-\frac{\beta}{2}} \int_{\R^d}
\int_{\R^d} \Vert \tilde{y}-\tilde{z} \Vert^{-\beta}
\Gamma(1,\tilde{z}) \Gamma(1,\tilde{y}) d\tilde{z} d\tilde{y} dr =c
\tau^{\frac{2-\beta}{2}+\gamma_1}.
      \end{split}
\end{equation*}
Observe that $\alpha=\alpha_1 \wedge \alpha_2 =
\frac{2-\beta}{2}+\frac{\gamma_2 \wedge (2 \gamma_1) }{2}$, which is bigger that $\eta$, thus proves {\bf (H6)}. 

We next treat the case of the stochastic wave equation with spatial dimension $d \in \{1,2,3\}$. Let $\Gamma_d$ be the fundamental solution of the deterministic wave
equation in $\R^d$ with null initial conditions.
In this case, changing variables  $[\tilde{\xi}=r \xi]$, for all $t \in [0,T]$, we get that
\begin{equation*} 
\begin{split}
\int_0^{t} \int_{\R^d} \vert \mathcal{F}\Gamma_d(r) (\xi) \vert^2
\Vert \xi \Vert^{\beta-d} \, d \xi dr &=\int_0^{t} \int_{\R^d} \frac{\sin^2(2\pi r \Vert \xi \Vert)}{4 \pi^2 \Vert \xi \Vert^2}
\Vert \xi \Vert^{\beta-d} \, d \xi dr \\
&=\int_0^{t} r^{2-\beta} \int_{\R^d} \frac{\sin^2(2\pi \Vert \tilde{\xi} \Vert)}{4 \pi^2 \Vert \tilde{\xi} \Vert^2}
\Vert \tilde{\xi} \Vert^{\beta-d} \, d \tilde{\xi} dr \\
&=c t^{3-\beta}.
\end{split}
\end{equation*}
Therefore, hypothesis {\bf (H4)} is satisfied for
$\eta=3-\beta$.
Notice also that
\begin{equation*}
\int_0^T \int_{\R^d} \vert \mathcal{F}\Psi(r) (\xi) \vert^2 \Vert
\xi \Vert^{\beta-d} \, d \xi dr \leq T^{2\gamma_2} \int_0^T
\int_{\R^d} \vert \mathcal{F}\Gamma_d(r) (\xi) \vert^2 \Vert \xi
\Vert^{\beta-d} \, d \xi dr <+\infty .
\end{equation*}
Moreover, changing variables $[\tilde{\xi}=r \xi]$, for all $\tau \in [0,1]$, we have that
\begin{equation*} \begin{split}
& \int_{0}^{\tau} \langle \Psi(r,\ast), \Gamma_d(r,\ast)
\rangle_{\mathcal{H}}  dr =\int_{0}^{\tau} \int_{\R^d} \mathcal{F}\Psi(r) (\xi)
\frac{\sin(2\pi r \Vert \xi \Vert)}{2 \pi \Vert \xi \Vert} \Vert \xi \Vert^{\beta-d} \, d \xi  dr \\
&\qquad =\int_{0}^{\tau} r^{1-\beta} \int_{\R^d} \mathcal{F}\Psi(r) \biggl(\frac{\tilde{\xi}}{r}\biggr)
\frac{\sin(2\pi \Vert \tilde{\xi} \Vert)}{2 \pi \Vert \tilde{\xi} \Vert} \Vert \tilde{\xi} \Vert^{\beta-d} \, d \tilde{\xi}  dr \\
&\qquad =  \langle \Psi(1,\ast), \Gamma_d(1,\ast) \rangle_{\mathcal{H}}
\int_{0}^{\tau} r^{2-\beta+\gamma_2}\, dr =c
\tau^{3-\beta+\gamma_2},
\end{split}
\end{equation*}
and
\begin{equation*}
\int_0^{\tau} r^{\gamma_1} \int_{\R^d} \vert \mathcal{F}\Gamma_d(r)
(\xi) \vert^2 \Vert \xi \Vert^{\beta-d} \, d \xi dr =c
\tau^{3-\beta+\gamma_1}.
\end{equation*}
Thus, hypotheses {\bf (H5)} and {\bf (H6)} hold taking $\alpha=3-\beta +\gamma_1
\wedge \gamma_2>3-\beta=\eta$.

\section{Strict positivity of the density}

The second aim of this article is to show that under
the same condition {\bf (H6)}, the density of the law
of the solution to the system (\ref{equa1}) is strictly positive in a point if the connected
component of the set where $\{\sigma_1,...,\sigma_q\}$ span $\R^k$ that contains this point has a non void intersection with the support of the law of the solution 
(see Theorem \ref{maintt} below). For this, we will first extend in our
situation a criterion of strict positiveness of densities proved by
Bally and Pardoux in \cite{Bally:98}, which uses essentially an
inverse function type result (see Lemma \ref{inv}) and Girsanov's
theorem. 

We will then apply this criterion to our system of SPDEs
(\ref{equa1}). In \cite{Bally:98}, the authors apply their criterion
to the density of the law of the random vector
$(u(t,x_1),....,u(t,x_k))$, $0 \leq x_1 \leq \cdots \leq x_k \leq
1$, where $u$ denotes the solution to the non-linear stochastic heat
equation driven by a space-time white noise, by using a localizing
argument. Hence, their situation is a bit different as ours, as we
deal with a system of SPDEs and we evaluate the solution at a single
point $(t,x) \in ]0,T] \times \R^d$. In a similar context, Chaleyat
and Sanz-Sol\'e in \cite{Chaleyat:03} study the strict positivity of
the density of the random vector $(u(t,x_1),....,u(t,x_k))$, $0 \leq
x_1 \leq \cdots \leq x_k \leq 1$, where $u$ is the solution to the
stochastic wave equation in two spatial dimensions, that is, take in
equation (\ref{equa1}) $d=2$, $k,q=1$, and $L=\frac{\partial^2}{\partial t^2}-\Delta$.

We will also apply the criterion of strict positivity 
to the case of a single equation, that is the solution
to (\ref{equa1}) with $k,q=1$. We will obtain that the density is strictly positivity 
in all $\R$ under the same hypotheses that D.Nualart and Quer-Sardanyons obtained in
\cite{Nualart:07} its existence and smoothness (see Theorem \ref{tdim1} below), with the addition condition of $\sigma$ being bounded.
Recall that in the recent papers \cite{Nualart:09} and \cite{Nualart:10}, the same authors obtain lower and upper bounds of Gaussian
type for the density of the solution to single spatially homogeneous SPDEs of the class (\ref{equa1}) in the case where $\sigma$ is a constant, which 
in particular shows the
strict positivity of the density in the quasi-linear case. A first step to show bounds for the non-linear case is done in the recent preprint \cite{ENualart:10}, where upper and lower bounds
for the density of non-linear stochastic heat equations in any space dimension are established.

There are also other many SPDEs for which the strict positivity of its density is studied.
For example, the case of non-linear hyperbolic SPDEs has been studied by Millet and Sanz-Sol\'e in \cite{Millet:97}, Fournier \cite{Fournier:99} considers a Poisson driven SPDE,
and the Cahn-Hilliard stochastic equation is studied by Cardon-Weber in \cite{Cardon:02}.

We next state the main result of this paper.
\begin{thm} \label{maintt}
Assume hypothesis {\bf (H6)} and that $\sigma, b$ are $\mathcal{C}^{\infty}$ functions with bounded partial derivatives of order greater than or equal to one and $\sigma$ is bounded.
Then for all $(t,x) \in
]0,T] \times \R^d$, the law of the random vector $u(t,x)$ admits a
$\mathcal{C}^{\infty}$ density on $\Sigma:=\{y \in \R^{k}: \sigma_1(y),...,\sigma_q(y) \text{ span } \R^k\}$ such that 
$p_{t,x}(y)>0$ if the connected component of $\Sigma$ which contains $y$ has a non void interesection with the support
of $u(t,x)$.
\end{thm}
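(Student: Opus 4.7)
The plan is to combine the smoothness of $p_{t,x}$ on $\Sigma$ from Theorem \ref{maint0} with a chaining argument that relies on a Malliavin inverse function theorem and Girsanov's theorem, following the Bally--Pardoux strategy of \cite{Bally:98}. Fix $(t,x)\in\,]0,T]\times\R^d$ and $y\in\Sigma$, and let $C$ denote the connected component of $\Sigma$ containing $y$. By hypothesis $C\cap\mathrm{supp}\,u(t,x)\neq\emptyset$, so some $y_0$ lies in this intersection and, for $r$ small enough that $B(y_0,r)\subset C$,
\[
\int_{B(y_0,r)}p_{t,x}(z)\,dz=\P\{u(t,x)\in B(y_0,r)\}>0.
\]
Continuity of $p_{t,x}$ on $\Sigma$ then produces a point $z^*\in C$ with $p_{t,x}(z^*)>0$. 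It therefore suffices to show that positivity of $p_{t,x}$ propagates along $C$: given $z',z''\in C$ with $p_{t,x}(z')>0$, one should conclude $p_{t,x}(z'')>0$. Since $C$ is open and path-connected, join $z'$ to $z''$ by a continuous path and take a partition $z'=z_0,z_1,\dots,z_N=z''$ so that each $z_i$ lies in an open ball $B_i\subset C$ with $z_{i+1}\in B_i$; the problem reduces to the inductive claim $p_{t,x}(z_i)>0\Rightarrow p_{t,x}(z_{i+1})>0$.

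The inductive step is carried out via a Malliavin inverse function theorem. When $u(t,x)\in B_i$, the spanning of $\R^k$ by $\sigma_1(z_i),\dots,\sigma_q(z_i)$ together with the leading term $\sigma_{ij}(u(t,\ast))\,\Gamma(t-r,x-\ast)$ of the Malliavin derivative (\ref{eqder}) allows one to construct, for small $\e>0$, a $q$-parameter family of test directions of the form $h^{(a)}_j(r,\cdot)=a_j\,\sigma_j(z_i)\,\Gamma(t-r,x-\ast)\,\1_{[t-\e,t]}(r)$, $a\in\R^q$, on which the tangent map $h\mapsto\langle Du(t,x),h\rangle_{\mathcal{H}_T^q}$ admits a uniformly bounded right inverse $R_i:\R^k\to\mathcal{H}_T^q$; the quantitative ellipticity required here is exactly hypothesis {\bf (H4)}, as in the Malliavin matrix estimate of Theorem \ref{maint0}. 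A quantitative implicit function theorem in the Malliavin framework (Lemma \ref{inv}, adapted from \cite{Bally:98} to the cylindrical Wiener setting by combining (\ref{num2}) and Lemma \ref{mes} with a Gronwall argument for the second-order remainder) then yields, for every small $\delta>0$, an adapted shift $h^\delta:\Omega\to\mathcal{H}_T^q$, uniformly bounded in $\omega$ in terms of $\|\sigma\|_\infty$, together with an event $A_i^\delta\subset\{u(t,x)\in B_i\}$ on which the perturbed solution $u^{h^\delta}(t,x)$ (the solution to (\ref{equa2}) driven by $W+\int_0^\cdot h^\delta(s)\,ds$) satisfies $|u^{h^\delta}(t,x)-z_{i+1}|<\delta$, and with $\P(A_i^\delta)$ controlled from below by a constant multiple of $\delta^k$ through the Jacobian of $R_i$.

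Girsanov's theorem for the cylindrical Wiener process on $\mathcal{H}^q$ \cite[Section 10]{Daprato:92} now provides a probability $\widetilde\P^\delta\sim\P$ under which $u^{h^\delta}(t,x)$ has the law of $u(t,x)$ under $\P$; Novikov's condition is trivial because $\|h^\delta\|_{\mathcal{H}_T^q}$ is uniformly bounded, which is where the boundedness of $\sigma$ enters. With the resulting bounded $L^p$-control on the Radon-Nikodym density one deduces
\[
\P\{|u(t,x)-z_{i+1}|<\delta\}\;=\;\widetilde\P^\delta\{|u^{h^\delta}(t,x)-z_{i+1}|<\delta\}\;\gtrsim\;\P(A_i^\delta)\;\gtrsim\;\delta^k.
\]
By continuity of $p_{t,x}$ on $\Sigma$, the identity $\P\{|u(t,x)-z_{i+1}|<\delta\}=\int_{B(z_{i+1},\delta)}p_{t,x}(w)\,dw=p_{t,x}(z_{i+1})|B(0,\delta)|+o(\delta^k)$ then forces $p_{t,x}(z_{i+1})>0$, closing the induction and yielding $p_{t,x}(y)>0$.

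The main obstacle is the Malliavin inverse function step. One must turn the qualitative spanning condition $\{\sigma_j(z_i)\}$ spanning $\R^k$, together with the quantitative ellipticity from {\bf (H4)}, into a genuine local inversion of the infinite-dimensional, anticipating map $h\mapsto u^h(t,x)$, with $h^\delta$ produced measurably in $\omega$, uniformly bounded in $\mathcal{H}_T^q$-norm, and carrying a tractable Jacobian estimate. The raw bounds come from (\ref{num2}) and Lemma \ref{mes}, but one needs Gronwall-type propagation of Malliavin-derivative bounds for the perturbed SPDE to control the second-order remainder, and to traverse the chain of $N$ discrete steps without letting $\|h^\delta\|_{\mathcal{H}_T^q}$ blow up -- which is exactly why $\sigma$ must be assumed bounded, beyond the hypotheses of Theorem \ref{maint0}.
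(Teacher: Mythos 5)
Your overall plan matches the paper's Bally--Pardoux strategy: Girsanov shifts, the quantitative finite-dimensional inverse function theorem of Lemma \ref{inv}, and propagation of positivity along a connected component of $\Sigma$. The paper organizes this in two steps: it first establishes an abstract criterion (Theorem \ref{tm1}) under condition $\mathbf{H}_{t,x}(y)$ on a sequence of \emph{deterministic} predictable shifts $\{g_n\}$, including a topological argument showing that any connected component $\tilde\Sigma$ of $\Sigma$ meeting $\textnormal{Supp}(\P_{u(t,x)})$ is in fact contained in the support, and then verifies $\mathbf{H}_{t,x}(y)$ for the concrete choice $g^j_n(s,z)=v_n^{-1}\1_{[t-2^{-n},t]}(s)\Gamma(t-s,x-z)$. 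Your chain-of-balls induction is a legitimate alternative to the topological step.

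Where your sketch diverges and risks a real gap is in the Girsanov/inversion step itself. You propose an ``adapted shift $h^\delta:\Omega\to\mathcal{H}_T^q$ produced measurably in $\omega$'' together with an event $A_i^\delta\subset\{u(t,x)\in B_i\}$ on which the shifted solution lands near $z_{i+1}$. A shift that can ``see'' whether $u(t,x)\in B_i$ cannot be predictable, and Girsanov does not apply; your closing phrase about ``local inversion of the infinite-dimensional, anticipating map $h\mapsto u^h(t,x)$'' is precisely what must \emph{not} appear. The paper avoids anticipation entirely: $g_n$ is deterministic, parameterized linearly by $z\in\R^k$, Girsanov (\ref{Girsanov})--(\ref{Girsanov2}) is applied for each fixed $z$, and only then is Lemma \ref{inv} applied $\omega$-by-$\omega$ to the $\mathcal{C}^2$ random map $\Phi(z)=\hat u^z_n(t,x):\R^k\to\R^k$. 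The lower bound $\theta^y_n>0$ on the density arises from the Jacobian change of variables in $z$ and the controlled Radon--Nikodym factor $J_{z^*(\omega)}$ — no random shift $h^\delta$ is ever substituted into Girsanov.

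Two further hypotheses are consumed in ways your sketch misplaces. You attribute the boundedness of $\sigma$ to making Novikov's condition work, but for the deterministic $g_n$ the $\mathcal{H}_T$-norm is a constant and Novikov is automatic. Boundedness of $\sigma$ is used instead in (\ref{ec2}) to give the uniform bound $\vert\mathcal{A}^z_{n,i,j}(t,x)\vert\le K$ on the leading term of $\varphi^z_{n,i,j}$, which is what drives condition (ii). And you invoke (H4) as ``quantitative ellipticity'', but the decisive inequality is the gap $\alpha>\eta$ from (H6): by Lemma \ref{mes} and (H4), the error $\mathcal{O}_{n,i,j}(t,x)$ in the representation $\mathcal{A}^0_{n,i,j}(t,x)=\sigma_{ij}(u(t,x))+\mathcal{O}_{n,i,j}(t,x)$ is of order $v_n^{-1}2^{-n\alpha}\lesssim 2^{-n(\alpha-\eta)}$, and it only vanishes because $\alpha>\eta$; without this, $\det\varphi^0_n(t,x)$ would not converge to $\det\sigma(u(t,x))$ and the spanning hypothesis on $B(y;r)$ would never be felt.
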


\begin{remark} \label{ue}
In the case where $\sigma$ is uniformly elliptic, that is, $\Vert \sigma(y) \xi \Vert^2 \geq c >0$, for all $\xi \in \R^q$ and $y \in \R^k$, under the hypotheses 
of Theorem \ref{maintt}, we get that for all $(t,x) \in ]0,T] \times \R^d$, the law of the random vector $u(t,x)$ admits a
$\mathcal{C}^{\infty}$ strictly positive density.
\end{remark}
  
The case of a single SPDE of the type (\ref{equa1}) is as follows.
\begin{thm} \label{tdim1}
Assume that $\sigma, b$ are $\mathcal{C}^{\infty}$ functions with 
bounded derivatives of order greater than or equal to one, and $0<c \leq \vert \sigma \vert \leq C$. 
Then, under conditions {\bf (H1)} and {\bf (H4)}, 
for all $(t,x) \in ]0,T] \times \R^d$, the law of $u(t,x)$ has a $\mathcal{C}^{\infty}$ strictly positive density function.
\end{thm}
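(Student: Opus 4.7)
The theorem is, in effect, a single-equation specialization of Theorem \ref{maintt} under weaker hypotheses. When $k=q=1$ and $|\sigma|\ge c>0$, the Malliavin matrix $\gamma_{u(t,x)}=\|Du(t,x)\|_{\mathcal{H}^q_T}^2$ is a scalar, so the additional assumptions (H5) and (H6) that appeared in the proof of Theorem \ref{maint0} to control an elliptic matrix reduce to the scalar non-degeneracy estimate of \cite{Nualart:07}. Consequently, existence and $\mathcal{C}^{\infty}$-smoothness of $p_{t,x}$ on all of $\R$ are already available from that paper using only (H1), (H4) and $|\sigma|\ge c$. The new content of Theorem \ref{tdim1} is therefore the strict positivity of $p_{t,x}$ everywhere on $\R$, and this is where the additional boundedness $|\sigma|\le C$ enters.

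My plan is to invoke the strict positivity criterion established in Section 5 (the spatially homogeneous adaptation of \cite[Theorem 3.1]{Bally:98}). For a fixed target $y_0\in\R$, that criterion combines Lemma \ref{inv} with Girsanov's theorem and delivers $p_{t,x}(y_0)>0$ as soon as the following controllability property of the skeleton
\begin{equation*}
\Phi^h(t,x) \;=\; \int_0^t \bigl\langle \sigma(\Phi^h(s,\cdot))\,\Gamma(t-s,x-\cdot),\,h(s)\bigr\rangle_{\mathcal{H}^q}\,ds + \int_0^t\!\!\int_{\R^d} b(\Phi^h(t-s,x-y))\,\Gamma(s,dy)\,ds
\end{equation*}
is verified: there exists $h\in\mathcal{H}^q_T$ with $\Phi^h(t,x)=y_0$ and a non-degenerate Fr\'echet derivative of the map $h\mapsto \Phi^h(t,x)$ at this $h$. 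The upper bound $|\sigma|\le C$ is used precisely to ensure that the resulting Girsanov exponential lies in $L^p(\Omega)$ for all $p$.

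Controllability is the main step. Since $\Phi^0\equiv 0$ (null initial condition), the strategy is to start from the zero control and drive $\Phi^h(t,x)$ continuously to $y_0$. Using the lower bound $|\sigma|\ge c$, the derivative of the skeleton map at $h$ in a direction $k\in\mathcal{H}^q_T$ is, up to lower order terms involving $b$, given by $\langle \sigma(\Phi^h(\cdot))\,\Gamma(t-\cdot,x-\cdot),\,k\rangle_{\mathcal{H}^q_T}$. Thanks to (H4) and $|\sigma|\ge c$, this pairing does not vanish identically in $k$, so the linearization is surjective onto the scalar $\R$. A one-parameter continuation argument $h=\lambda h_0$ combined with the intermediate value theorem, or equivalently an implicit function scheme, then produces an $h\in\mathcal{H}^q_T$ achieving any prescribed $y_0$.

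The principal obstacle is carrying out this controllability rigorously when $\Gamma$ is merely a distribution and $\mathcal{H}^q_T$ may contain distributions rather than functions, so that pointwise manipulations of $\sigma(\Phi^h)^{-1}$ or the ansatz $h=\lambda h_0$ are not a priori legitimate. The plan is to regularize: replace $\Gamma$ by $\Gamma_n=\Gamma*\psi_n$ with smooth compactly supported mollifiers $\psi_n$, solve the controllability for the regularized skeleton $\Phi^h_n$ where everything is a bona fide function, and then pass to the limit $n\to\infty$ using the continuity of the skeleton in the $\mathcal{H}^q_T$ topology together with the uniform bounds furnished by (H1) and (H4). Once this controllability is in hand, the boundedness of $\sigma$ gives the required moments of the Girsanov density, and Lemma \ref{inv} combined with the already established smoothness of $p_{t,x}$ concludes the argument at the arbitrarily chosen $y_0\in\R$.
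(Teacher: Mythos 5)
Your proposal takes a genuinely different route from the paper, and it has a substantive gap.

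The paper does \emph{not} prove controllability of a deterministic skeleton. It applies the localization criterion of Theorem \ref{tm1} (the spatially homogeneous adaptation of Bally--Pardoux), with the same sequence of Girsanov shifts $g_n(s,z)=v_n^{-1}\,1_{[t-2^{-n},t]}(s)\,\Gamma(t-s,x-z)$ used in the proof of Theorem \ref{maintt}. The key observation in the $k=q=1$ case is that, because the scalar $\sigma$ satisfies $c\le|\sigma|\le C$, the ``local'' term
\begin{equation*}
\mathcal{A}^0_n(t,x)=v_n^{-1}\int_0^{2^{-n}}\bigl\langle \sigma(u(t-r,\ast))\,\Gamma(r,x-\ast),\,\Gamma(r,x-\ast)\bigr\rangle_{\mathcal{H}}\,dr
\end{equation*}
satisfies $|\mathcal{A}^0_n(t,x)|\ge c$ directly, without passing through the modulus-of-continuity estimate of Lemma \ref{mes}. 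This is precisely why {\bf (H5)} and {\bf (H6)} are not needed here and only {\bf (H1)} (to control $\varphi^0_n-\mathcal{A}^0_n$) and {\bf (H4)} (for the smoothness part) are assumed. Part (ii) of ${\bf H_{t,x}(y)}$ is verified exactly as in Theorem \ref{maintt}. Crucially, Theorem \ref{tm1} does not ask you to steer a skeleton to $y_0$: once ${\bf H_{t,x}(y)}$ holds for $y$ in the (nonempty) support of $\P_{u(t,x)}$ and $\Sigma=\R$ is connected, the connectedness argument in the proof of Theorem \ref{tm1} propagates positivity to all of $\R$. Controllability is circumvented entirely.

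Your plan instead reaches for a support-theorem/controllability argument: exhibit $h\in\mathcal{H}^q_T$ with $\Phi^h(t,x)=y_0$ and a nondegenerate linearization. This is a different and substantially heavier machine, and as sketched it has a genuine gap: the criterion you are invoking (Theorem \ref{tm1}) perturbs $W$ by a finite-dimensional parameter $z$ and applies the inverse function theorem to $z\mapsto\hat u^z_n(t,x)$, not to the map $h\mapsto\Phi^h(t,x)$; it never produces or needs a control achieving $y_0$. If you insist on the skeleton route you would have to prove a support theorem for this class of spatially homogeneous SPDEs in addition to the controllability, and your continuation argument $h=\lambda h_0$ plus IVT is unfinished: the drift $b$ is an arbitrary Lipschitz function, so there is no reason $\lambda\mapsto\Phi^{\lambda h_0}(t,x)$ is monotone or that its range is all of $\R$, and the mollification $\Gamma_n=\Gamma*\psi_n$ needs uniform-in-$n$ estimates on the skeleton that you do not supply. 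In short, the approach is not wrong in principle but it is a much harder path, and the paper deliberately avoids it via the connectedness step of Theorem \ref{tm1}.
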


\begin{remark}
Observe that we only require
the continuity of the density in order to show the strict positivity (see Theorem \ref{tm1} below).
Hence the $\mathcal{C}^{\infty}$ condition on the coefficients
can be weakened in order to show the strict positivity of the density.
\end{remark}

\subsection{Criterion for the strict positivity of the density}

As in Section $4$, we will study the strict positivity of the density on the set where the columns of the $k \times q$ matrix $\sigma$,
$\sigma_1,...,\sigma_q$ span $\R^k$. If $y \in \R^k$ is in this set, then there exists $l_1(y)=:l_1,...,l_k(y)=:l_k$
such that $\sigma_{l_1}(y),...,\sigma_{l_k}(y)$ span $\R^k$. Thus it suffices to make all the calculations using 
$W^{l_1(y)},...,W^{l_k(y)}$ and ignoring the other $W^j$. In order to simplify the notation we will take $l_i=i$.

Given $T>0$, a predictable process $g  \in \mathcal{H}_T^k$ and $z \in \R^{k}$, we define the process
$\hat{W}=(\hat{W}^1,...,\hat{W}^{k})$ as
$$
\hat{W}^j (1_{[0,t]}h_j)=W^j(1_{[0,t]}h_j) + z_j \int_0^t \langle
h_j(\ast), g^{j}(s,\ast) \rangle_{\mathcal{H}}  ds, $$
for any $h \in \mathcal{H}^k$, $j=1,...,k$, $t \in [0,T]$.

We set $\hat{W}_t(h)=\sum_{j=1}^{k} \hat{W}^j (1_{[0,t]} h_j)$.
Then, by \cite[Theorem 10.14]{Daprato:92}, $\{\hat{W}_t, t \in
[0,T]\}$ is a cylindrical Wiener process in $\mathcal{H}^{k}$ on the
probability space $(\Omega, \mathcal{F}, \hat{\P})$, where
$$
\frac{d \hat{\P}}{d \P} (\omega)= J_z(\omega), \; \; \omega \in
\Omega,
$$
where
\begin{equation} \label{Girsanov}
J_{z}=\exp\biggl(- \sum_{j=1}^{k} z_j \int_0^T \int_{\R^d} g^j(s,y)
W^j(ds, dy)-\frac12 \sum_{j=1}^{k} z_j^2 \int_0^T \Vert g^j(s,\ast)
\Vert_{\mathcal{H}}^2 \,ds\biggr).
\end{equation}
Then, for any predictable process $Z \in L^2(\Omega \times [0,T];
\mathcal{H}^{k})$ and $j=1,...,k$, it yields that
\begin{equation*} \begin{split}
\int_0^T \int_{\R^d} Z_{j}(s,y) \hat{W}^j(ds, dy)
=\int_0^T \int_{\R^d} Z_{j}(s,y) W^j(ds, dy) +z_j \int_0^T \langle Z_j(s, \ast), g^j(s, \ast) \rangle_{\mathcal{H}}  ds.
\end{split}
\end{equation*}

For any $(t,x) \in [0,T] \times \R^k$, let $\hat{u}^z(t,x)$ be the solution to equation (\ref{equa2}) with
respect to the cylindrical Wiener process $\hat{W}$, that is, for
$i=1,...,k$,
\begin{equation} \label{direcder}
\begin{split}
\hat{u}_i^z(t,x)&= \sum_{j=1}^{k} \int_0^t \int_{\R^d} \Gamma(t-s, x-y)  \sigma_{ij}(\hat{u}^z(s,y)) W^j(ds, dy) \\
&+\sum_{j=1}^{k} z_j \int_0^t \langle \Gamma(t-s, x-\cdot) \sigma_{ij}(\hat{u}^z(s,\ast)), g^j(s,\ast) \rangle_{\mathcal{H}} ds \\
&+\int_0^t \int_{\R^d} b_i(\hat{u}^z(t-s,x-y)) \Gamma(s, dy) ds.
\end{split}
\end{equation}
Then, the law of $u$ under $\P$ coincides with the law of
$\hat{u}^z$ under $\hat{P}$, which implies that for all  
nonnegative continuous and bounded function $f:\R^{k} \mapsto \R$,
\begin{equation} \label{Girsanov2}
\E[f(u(t,x))]=\E[f(\hat{u}^z(t,x)) J_z].
\end{equation}

Given a sequence $\{ g_n \}_{n \geq1}$ of predictable processes in
$\mathcal{H}^k_T$ and $z \in \R^{k}$, let
$\hat{u}^z_n(t,x)$ be the solution to equation (\ref{equa2}) with
respect to the cylindrical Wiener process $\{\hat{W}^n_t, t \in
[0,T]\}$, where $\hat{W}^n_t(h)=\sum_{j=1}^{k} \hat{W}^{n,j}
(1_{[0,t]} h_j)$ for any $h \in \mathcal{H}^k$, and
$$
\hat{W}^{n,j}(1_{[0,t]} h_j)=W^j(1_{[0,t]} h_j)+z_j\int_0^t  \langle h_j(\ast), g^j_n(s,\ast) \rangle_{\mathcal{H}}  ds.
$$
Set the $k \times k$ matrix $\varphi^z_n(t,x):=\partial_z
\hat{u}^z_n(t,x)$, and the Hessian matrix of the random vector
$\hat{u}^z_n(t,x)$, $\psi^z_n(t,x):=\partial^2_z \hat{u}^z_n(t,x)$
(which is a tensor of order 3). We denote by $\Vert \cdot \Vert$ the
norm of a $n \times n$ matrix $A$ defined as
$$
\Vert A \Vert=\sup_{\xi \in \R^{n}, \Vert \xi \Vert=1} \Vert A \xi
\Vert.
$$

We next proceed to the study of the strict positivity of the density $p_{t,x}(\cdot)$ of the law of $u(t,x)$, where $(t,x) \in ]0,T] \times \R^d$ are fixed.
We need to introduce the following condition. We say that $y \in \R^k$ satisfies ${\bf H_{t,x}(y)}$ if
\begin{itemize}
\item[${\bf H_{t,x}(y)}$] there exists a
sequence of predictable processes $\{g_n\}_{n \geq 1}$ in
$\mathcal{H}_T^k$, and positive constants $c_1$, $c_2$, $r_0$ and
$\delta$ such that
\begin{itemize}
\item[\textnormal{(i)}] $\limsup_{n \rightarrow \infty} \P \, \biggl\{ (\Vert u(t,x)-y \Vert \leq r)
\cap ( \textnormal{det} \, \varphi^0_n(t,x) \geq c_1)\biggr\} >0, \, \, \text{for all } \, r \in ]0,r_0]$.
\item[\textnormal{(ii)}] $\lim_{n \rightarrow \infty} \P \, \biggl\{ \sup_{\Vert z \Vert \leq \delta} (\Vert  \varphi^z_n(t,x)\Vert + \Vert  \psi^z_n(t,x) \Vert ) \leq c_2\biggr\} =1$.
\end{itemize}
\end{itemize}

\begin{remark}
Observe that if a point $y$ verifies ${\bf H_{t,x}(y)}$ then automatically belongs to the support of the law of $u(t,x)$.
\end{remark}

We next provide the main result of this section which is a criterion
for the strict positivity of a density, which was proved in
\cite[Theorem 3.3]{Bally:98} for the case where
$\mathcal{H}^k_T$ is replaced by $L^2([0,1];\R^{k})$. Our case follows along
the same lines as theirs. This criterium uses the following quantitative
version of the classical inverse function theorem. Let $B(x;r)$
denote the ball of $\R^{k}$ with center $x$ and radius $r>0$.
\begin{lem}  \textnormal{\cite[Lemma 3.2]{Bally:98} and \cite[Lemma 4.2.1]{Nualart:98}}  \label{inv}
Let $\Phi:\R^{k} \mapsto \R^{k}$ be a $\mathcal{C}^2$ mapping such that for some constants
$\beta>1$ and $\delta>0$, 
\begin{equation*}
\vert \textnormal{det} \, \Phi'(0) \vert \geq \frac{1}{\beta} \; \text{ and } \;
\sup_{\Vert z \Vert \leq \delta} (\vert \Phi(z) \vert +\vert \Phi'(z) \vert + \vert \Phi''(z) \vert) \leq \beta.
\end{equation*}
Then there exists constants $R \in ]0,1]$ and
$\alpha >0$, such that $\Phi$
is a diffeomorphism from a neighborhood of $0$ contained in the ball $B(0;R)$ onto the ball $B(\Phi(0); \alpha)$.
\end{lem}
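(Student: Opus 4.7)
The plan is to present this as a quantitative packaging of the classical inverse function theorem, tracking how the final radii $R$ and $\alpha$ depend on $\beta$, $\delta$, and the dimension $k$. Write $A := \Phi'(0)$. My first step will be to bound the operator norm of $A^{-1}$ purely in terms of $\beta$ and $k$: since every entry of $A$ has absolute value at most $\beta$ (as $|\Phi'(0)| \leq \beta$), the adjugate formula $A^{-1} = (\det A)^{-1}\,\mathrm{adj}(A)$ combined with $|\det A| \geq 1/\beta$ yields $\|A^{-1}\| \leq C_k\beta^k =: N$ for some dimensional constant $C_k$, because each entry of the adjugate is a signed $(k-1)\times(k-1)$ minor of $A$.

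Second, after translating the target so that $\Phi(0) = 0$, I would pass to the normalized map $\Psi(z) := A^{-1}\Phi(z)$ on $B(0;\delta)$. This satisfies $\Psi(0) = 0$, $\Psi'(0) = I$, and $\|\Psi''(z)\| \leq N\beta =: M$ for $\|z\| \leq \delta$. Writing $\Psi(z) = z + Q(z)$, the remainder satisfies $Q(0) = 0$ and $Q'(0) = 0$, so $\|Q'(z)\| \leq M\|z\|$. I would then set $\rho := \min\{\delta,\, 1/(2M),\, 1\}$, which guarantees $\|Q'(z)\| \leq 1/2$ on $B(0;\rho)$, and hence $\Psi'(z) = I + Q'(z)$ is invertible there.

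Third, I would invert $\Psi$ by the Banach fixed-point theorem: for each $y \in B(0;\rho/2)$, the map $T_y(w) := y - Q(w)$ is a $1/2$-contraction on $B(0;\rho)$ that sends $B(0;\rho)$ into itself (since $\|T_y(w)\| \leq \|y\| + \|Q(w)-Q(0)\| \leq \rho/2 + \|w\|/2 \leq \rho$), and so it admits a unique fixed point $w_y \in B(0;\rho)$ solving $\Psi(w_y) = y$. Together with the invertibility of $\Psi'$ on $B(0;\rho)$, this shows that $\Psi$ is a $C^2$-diffeomorphism from the open neighborhood $U := \Psi^{-1}(B(0;\rho/2)) \cap B(0;\rho)$ of $0$ onto $B(0;\rho/2)$. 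Translating back, $\Phi = \Phi(0) + A\Psi$ becomes a $C^2$-diffeomorphism of $U$ onto $\Phi(0) + A\cdot B(0;\rho/2)$, which (since $\|A^{-1}\| \leq N$) contains the ball $B(\Phi(0);\alpha)$ with $\alpha := \rho/(2N)$. Taking $R := \rho$ yields the statement.

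The main obstacle, such as it is, is purely quantitative book-keeping: the one subtle point is that the hypothesis on $\det \Phi'(0)$ controls $\|A^{-1}\|$ only via the adjugate formula, which produces the $\beta^k$ blow-up and forces $M$, $\rho$, and $\alpha$ to depend on all three of $k$, $\beta$, and $\delta$. The analytic content is nothing beyond the standard contraction-mapping proof of the inverse function theorem, so no new ideas are needed; the statement is essentially a record of how the radii emerge from the hypotheses.
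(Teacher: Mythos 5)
The paper states this lemma without proof, citing it from Bally--Pardoux and from Nualart's Saint-Flour notes, so there is no in-paper argument to compare against. Your contraction-mapping derivation is correct and is precisely the standard proof of the quantitative inverse function theorem recorded in those references: normalize by $A^{-1}$ (bounding $\Vert A^{-1}\Vert$ through the adjugate formula and the lower bound on $\vert\det A\vert$), extract a Lipschitz bound $\Vert Q'(z)\Vert\leq M\Vert z\Vert$ on the remainder from $\vert\Phi''\vert\leq\beta$, and then solve $\Psi(w)=y$ on a ball of radius $\rho=\min\{\delta,1/(2M),1\}$ by Banach's fixed point theorem, finally translating back by $\Phi(0)$ and $A$. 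The one feature worth flagging, which you do note, is that $R$ and $\alpha$ come out depending only on $k$, $\beta$ and $\delta$ and not on the particular map $\Phi$; this uniformity is essential when the lemma is applied in the proof of Theorem~\ref{tm1} with $\Phi(z)=\hat u^z_n(t,x)$, since one must let $n\to\infty$ and work $\omega$ by $\omega$ on an event of positive probability.
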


\begin{thm} \label{tm1}
Let $(t,x) \in ]0,T] \times \R^d$ and $y \in \R^k$ be such that ${\bf H_{t,x}(y)}$ holds true. Suppose that the law of
random vector $u(t,x)$ has a continuous density $p_{t,x}$ in a neighborhood of $y$. Then $p_{t,x}(y)>0$.
Moreover, if ${\bf H_{t,x}(y)}$ holds on $\textnormal{Supp}(\P_{u(t,x)}) \cap \Sigma$, with
$\Sigma=\{y \in \R^k:\sigma_1(y),...,\sigma_q(y) \text{ span } \R^k \},$ then for every connected subset $\tilde{\Sigma} \subset \Sigma$ such that
$\textnormal{Supp}(\P_{u(t,x)}) \cap \tilde{\Sigma}$ is non void, $p_{t,x}$ is a strictly positive function on $\tilde{\Sigma}$.
\end{thm}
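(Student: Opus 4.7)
The plan is to follow closely the argument of Bally and Pardoux for \cite[Theorem 3.3]{Bally:98}, with their space $L^2([0,1];\R^k)$ replaced by our Hilbert space $\mathcal{H}^k_T$. Since $p_{t,x}$ is continuous near $y$, it suffices to show that there exists $c_0>0$ such that for every sufficiently small $r>0$ and every continuous $f_r\geq 0$ supported in $B(y;r)$ with $\int f_r\,dy'=1$ one has $\E[f_r(u(t,x))]\geq c_0$; for then $\sup_{B(y;r)}p_{t,x}\geq c_0$, and continuity gives $p_{t,x}(y)\geq c_0$ upon letting $r\downarrow 0$. With the sequence $\{g_n\}$ and constants $c_1,c_2,\delta$ from ${\bf H_{t,x}(y)}$ and the associated Girsanov density $J^{(n)}_z$ from (\ref{Girsanov}), the identity (\ref{Girsanov2}) reads $\E[f_r(u(t,x))]=\E[f_r(\hat{u}^z_n(t,x))\,J^{(n)}_z]$ for every $z\in\R^k$ and $n\geq 1$. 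Integrating in $z$ over $B(0;\rho)$ with $\rho\leq\delta$ and applying Fubini,
\begin{equation*}
|B(0;\rho)|\,\E[f_r(u(t,x))]=\E\biggl[\int_{B(0;\rho)} f_r(\hat{u}^z_n(t,x))\,J^{(n)}_z\,dz\biggr].
\end{equation*}

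The heart of the argument is now to apply Lemma \ref{inv} to the random $C^2$-map $\Phi_\omega(z):=\hat{u}^z_n(t,x)(\omega)$ on the localized event
\begin{multline*}
A_{n,M}:=\bigl\{u(t,x)\in B(y;r)\bigr\}\cap\bigl\{\|u(t,x)\|\leq M\bigr\}\cap\bigl\{\det\varphi^0_n(t,x)\geq c_1\bigr\}\\
\cap\Bigl\{\sup\nolimits_{\|z\|\leq\delta}(\|\varphi^z_n\|+\|\psi^z_n\|)\leq c_2\Bigr\}.
\end{multline*}
On $A_{n,M}$ one has $|\Phi_\omega(z)|\leq M+\delta c_2$ for $\|z\|\leq\delta$ and $|\det\Phi_\omega'(0)|\geq c_1$, so Lemma \ref{inv} applies with a deterministic $\beta=\beta(M,c_1,c_2,\delta)$ and produces deterministic radii $R\in(0,\delta]$ and $\alpha>0$ such that $\Phi_\omega$ is a diffeomorphism from a neighborhood $V_n(\omega)\subset B(0;R)$ of $0$ onto $B(u(t,x)(\omega);\alpha)$. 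Fixing $\rho=R$ and shrinking $r$ so that $r\leq\alpha$, we obtain $B(y;r)\subset B(u(t,x);\alpha)$ on $A_{n,M}$, and the change of variable $y'=\Phi_\omega(z)$ together with $|\det\varphi^z_n|\leq c_2^k$ yields
\begin{equation*}
\int_{B(0;R)} f_r(\hat{u}^z_n(t,x))\,J^{(n)}_z\,dz\;\geq\; c_2^{-k}\int_{\R^k} f_r(y')\,J^{(n)}_{z(y')}\,dy'\quad\text{on }A_{n,M},
\end{equation*}
where $z(y')\in B(0;R)$ denotes the preimage under $\Phi_\omega$.

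Taking expectations and applying the Cauchy--Schwarz lower bound $\E[\1_B J]\geq\E[\1_B]^2/\E[J^{-1}]$ together with a uniform bound on $\E[(J^{(n)}_z)^{-1}]$ for $\|z\|\leq R$ (which follows from the exponential form (\ref{Girsanov}) and deterministic boundedness of $\|g_n\|_{\mathcal{H}^k_T}$, itself ensured by the boundedness of $\sigma$ assumed in Theorem \ref{maintt} for the natural choice $g^j_n(s,\ast)\sim\sigma_{\cdot j}(u(s,\ast))\Gamma(t-s,x-\ast)$), one gets $\liminf_n\E[\1_{A_{n,M}}J^{(n)}_{z(y')}]\geq c_3>0$ uniformly in $y'\in B(y;r)$, provided $M$ is large enough that $\P(\|u(t,x)\|>M)$ is so small that condition (i) of ${\bf H_{t,x}(y)}$ still gives $\limsup_n\P(A_{n,M})>0$. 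Since $\int f_r=1$, this yields the required $\E[f_r(u(t,x))]\geq c_0>0$ and proves the first assertion. For the second, let $\tilde\Sigma\subset\Sigma$ be connected with $\tilde\Sigma\cap\mathrm{Supp}(\P_{u(t,x)})\neq\emptyset$, and set $U_+:=\{p_{t,x}>0\}\cap\tilde\Sigma$. By the first assertion and the hypothesis of the second, $U_+\supset\tilde\Sigma\cap\mathrm{Supp}(\P_{u(t,x)})\neq\emptyset$, and $U_+$ is open in $\tilde\Sigma$ by continuity of $p_{t,x}$ on $\Sigma$; its complement equals $U_0=\tilde\Sigma\setminus\mathrm{Supp}(\P_{u(t,x)})$ (any point of $\tilde\Sigma$ outside $\mathrm{Supp}$ has $p_{t,x}=0$ by continuity, and the reverse inclusion is the first assertion), which is open in $\tilde\Sigma$ since $\mathrm{Supp}(\P_{u(t,x)})$ is closed, so connectedness forces $U_0=\emptyset$. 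The main technical obstacle throughout is keeping $R$, $\alpha$ and the lower bound $c_0$ genuinely uniform in $n$ and $r$; this is precisely what dictates the extra boundedness assumption on $\sigma$ in Theorem \ref{maintt} (needed to control the negative exponential moments of $J^{(n)}_z$) and motivates the localization on $\{\|u(t,x)\|\leq M\}$ (needed to pass from the random hypotheses of Lemma \ref{inv} to deterministic radii).
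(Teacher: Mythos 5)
Your overall plan for the first assertion is the one the paper (implicitly, by citing Bally--Pardoux) uses: Girsanov plus the quantitative inverse function theorem of Lemma \ref{inv} applied to $z\mapsto\hat u^z_n(t,x)$ on a localized event, followed by a change of variables in $z$. You correctly identify the need for the additional truncation $\{\Vert u(t,x)\Vert\le M\}$, which is necessary to make the constant $\beta$ in Lemma \ref{inv} deterministic. However, there is a concrete error in your treatment of the Girsanov densities. You claim that $\Vert g_n\Vert_{\mathcal H^k_T}$ is bounded uniformly in $n$, and deduce a uniform-in-$n$ bound on $\E[(J^{(n)}_z)^{-1}]$ for $\Vert z\Vert\le R$. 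For the sequence actually used in the proof of Theorem \ref{maintt}, namely $g^j_n(s,z)=v_n^{-1}\1_{[t-2^{-n},t]}(s)\Gamma(t-s,x-z)$, a direct computation gives $\Vert g^j_n\Vert^2_{\mathcal H_T}=v_n^{-1}\to\infty$, so $\E[(J^{(n)}_z)^{-1}]=\exp(\Vert z\Vert^2 v_n^{-1})$ blows up and your claimed $\liminf_n\E[\1_{A_{n,M}}J^{(n)}_{z(y')}]\ge c_3>0$ is false as stated. (Your ``natural choice'' $g^j_n\sim\sigma_{\cdot j}(u)\Gamma$ without the $v_n^{-1}$ normalization would indeed have bounded $\mathcal H_T$-norm, but it would make $\det\varphi^0_n\to0$, contradicting (i) in $\mathbf{H}_{t,x}(y)$; the normalization cannot be dropped.) The fix is simple and matches the paper's phrasing ``there exists $n$ sufficiently large'': the radii $R,\alpha$ produced by Lemma \ref{inv} depend only on $c_1,c_2,\delta,M$, \emph{not} on $n$, so one may first fix $R,\alpha$, then pick a single $n$ (along the subsequence where (i) holds with $r=\alpha/2$ and (ii) is violated only with small probability) so that $\P(A_{n,M})>0$, and run the Girsanov/change-of-variables argument for that fixed $n$, where $\E[(J^{(n)}_z)^{-1}]$ is finite. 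No uniformity in $n$ is needed, and the boundedness of $\sigma$ in Theorem \ref{maintt} is not used here at all. Also note a minor slip: you need $2r\le\alpha$, i.e.\ $r\le\alpha/2$, to conclude $B(y;r)\subset B(u(t,x);\alpha)$ on $\{\Vert u(t,x)-y\Vert\le r\}$, consistent with the paper's $B(y,\alpha/2)$.

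For the second assertion your argument is correct and in fact a touch cleaner than the paper's. You partition the connected set $\tilde\Sigma$ into the two relatively open sets $U_+=\{p_{t,x}>0\}\cap\tilde\Sigma=\textnormal{Supp}(\P_{u(t,x)})\cap\tilde\Sigma$ (open by continuity of $p_{t,x}$ on $\Sigma$) and $U_0=\tilde\Sigma\setminus\textnormal{Supp}(\P_{u(t,x)})$ (open since the support is closed), the identification $U_+=\textnormal{Supp}\cap\tilde\Sigma$ using the first assertion in one direction and continuity of the density in the other; connectedness forces $U_0=\emptyset$. The paper achieves the same conclusion by an explicit curve $x(\lambda)$ and the supremum $\lambda_*=\sup\{\lambda:x(\lambda)\in\textnormal{Supp}(\P_{u(t,x)})\}$, producing a boundary point of the support at which $p_{t,x}>0$, a contradiction. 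The two arguments are equivalent; yours avoids introducing the curve.
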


\begin{proof}
Let $y \in \R^k$ satisfying ${\bf H_{t,x}(y)}$. 
Then, proceeding as in \cite[Theorem 3.3]{Bally:98}, using Lemma \ref{inv} with $\Phi(z)=\hat{u}^z_n(t,x)$, one can show that there exists $n$ sufficiently large such that for any continuous and bounded function $f:\R^{k} \mapsto \R_+$ it holds that
\begin{equation*} 
\E[f(u(t,x))] \geq \int_{B(y,\frac{\alpha}{2})} f(v) \theta_n^y(v) dv,
\end{equation*}
where $\alpha>0$ is the parameter of Lemma \ref{inv} and $\theta_n^y$ is a strictly positive continuous function on $B(y,\frac{\alpha}{2})$.
This proves the first statement of the theorem.  

We next assume that ${\bf H_{t,x}(y)}$ holds on $\textnormal{Supp}(\P_{u(t,x)}) \cap \Sigma$. It suffices to check that if $\tilde{\Sigma} \subset \Sigma$ is a connected component
of $\Sigma$ such that
$\textnormal{Supp}(\P_{u(t,x)}) \cap \tilde{\Sigma}$ is non void, then $\tilde{\Sigma} \subset \textnormal{Supp}(\P_{u(t,x)})$. Indeed, this implies that ${\bf H_{t,x}(y)}$ 
holds for all $y \in \tilde{\Sigma}$ and by the first part of the theorem one obtains that $p_{t,x}(y)>0$ and the theorem is proved. 

Suppose that $\tilde{\Sigma} \subset \textnormal{Supp}(\P_{u(t,x)})$ is false. Then one may find $x_1 \in \tilde{\Sigma}$ such that $x_1 \notin \textnormal{Supp}(\P_{u(t,x)})$.
Moreover, since $\textnormal{Supp}(\P_{u(t,x)}) \cap \tilde{\Sigma}$ is non void one may find $x_2 \in \textnormal{Supp}(\P_{u(t,x)}) \cap \tilde{\Sigma}$. Now, since $\tilde{\Sigma}$
is connected, one can find
a continuous curve $x(\lambda)$, $\lambda \in [0,1]$ contained in $\tilde{\Sigma}$ with $x(0)=x_2$ and $x(1)=x_1$. Take now 
$\lambda_{\ast}=\text{sup} \{ \lambda: x(\lambda) \in   \textnormal{Supp}(\P_{u(t,x)})\}$.
Since $x_1=x(1) \notin \textnormal{Supp}(\P_{u(t,x)})$ and the complementary of $\textnormal{Supp}(\P_{u(t,x)})$ is an open set, it follows that $\lambda_{\ast}<1$. Then we
have a sequence $\lambda_n \uparrow \lambda_{\ast}$ such that $x(\lambda_n) \in \textnormal{Supp}(\P_{u(t,x)})$ and we also know that $x(\lambda) \notin  \textnormal{Supp}(\P_{u(t,x)})$
for $\lambda_{\ast}<\lambda \leq 1$. This means that $x(\lambda_{\ast})$ is on the boundary of $\textnormal{Supp}(\P_{u(t,x)})$, and since this set is closed, we conclude that
$x(\lambda_{\ast}) \in \textnormal{Supp}(\P_{u(t,x)})$. Since $x(\lambda_{\ast})$ belongs also to $\tilde{\Sigma}$ we have that $p_{t,x}(x(\lambda_{\ast}))>0$. But this is contradictory
with the fact that $x(\lambda_{\ast})$ is on the boundary of $\textnormal{Supp}(\P_{u(t,x)})$.
\end{proof}

\begin{remark}
Observe that Theorem \ref{tm1} says that if a specific point $y_{\ast}$ is in the support of the law of $u(t,x)$ then $p_{t,x}$ is strictly positive on the connected component
of $\Sigma$ that contains $y_{\ast}$. However, this criterion based on the inverse function theorem does not give information about the support of the law. Nevertheless,
in order to prove that $p_{t,x}(y)>0$, we do not have to check that $y$ itself belongs to the support, but only that the connected component of $\Sigma$ which contains $y$ has a non void intersection 
with the support, and in particular, if we have ellipticity everywhere, then $\Sigma=\R^k$ and so the above condition is automatically verified (see Remark \ref{ue}).
\end{remark}

\begin{remark} 
There is a little mistake in page 56, lines 11-13 of \cite{Bally:98}. Indeed, the support is a closed set and their set
$\{ \varphi \neq 0\}^d$ is open, thus they may not intersect, so it is not always possible to choose a point $y$ in the boundary of the support and
such that $\varphi(y) \neq 0$. For example, if $\{ \varphi \neq 0\}=\{-1,1\}^c$ and the support is $[-1,1]$, in the boundary
of the support one has that $\varphi =0$. 
\end{remark}

\subsection{Proof of Theorem \ref{maintt}}

Let $(t,x) \in ]0,T] \times \R^{d}$ be fixed. As explained at the beginning of Section 5.1, let $y\in \textnormal{Supp}(\P_{u(t,x)})$ be fixed such that $\sigma_1(y),...,\sigma_k(y)$ span $\R^k$. 
Consider the sequence of predictable processes
$\{g_n\}_{n \geq 1}$ in $\mathcal{H}_T^k$, defined by
\begin{equation}
g^{j}_{n}(s, z)=v_n^{-1} 1_{[t-2^{-n},t]} (s) \Gamma(t-s,x-z), \; \;
n \geq 1, \; j=1,...,k,
\end{equation}
where
$$
v_n:=\int_{0}^{2^{-n}} \int_{\R^d} \vert \mathcal{F}\Gamma(r) (\xi) \vert^2  \mu(d \xi) dr.
$$
We are going to prove that assumptions (i) and (ii) of ${\bf H_{t,x}(y)}$ are satisfied for this sequence of predictable processes. Then, then second statement of Theorem \ref{tm1} will give the conclusion of Theorem \ref{maintt}.

We start by some preliminary computations. We write
\begin{equation*}
\varphi^z_{n,i,j}(t,x)=\int_0^t \langle D^{(j)}_r(\hat{u}^z_{n,i}(t,x)) , g^j_{n}(r, \ast) \rangle_{\mathcal{H}} dr, \; 1 \leq i,j \leq k,
\end{equation*}
which follows since the processes defined by the left hand-side and right hand-side satisfy the same equation,
and there is uniqueness of solution.
Then by (\ref{direcder}) and the stochastic differential equation satisfied by
the derivative (\ref{eqder}), we obtain that
\begin{equation*}
\varphi^z_{n,i,j}(t,x)= \mathcal{A}^z_{n,i,j}(t,x)+\mathcal{B}^z_{n,i,j}(t,x)+\mathcal{C}^z_{n,i,j}(t,x),
\end{equation*}
where
\begin{equation*} \begin{split}
&\mathcal{A}^z_{n,i,j}(t,x)= v_n^{-1} \int_0^{2^{-n}}
\langle \sigma_{ij}(\hat{u}^z_n(t-r,\ast)) \Gamma(r, x-\ast),  \Gamma(r, x-\ast) \rangle_{\mathcal{H}} dr, \\
&\mathcal{B}^z_{n,i,j}(t,x):=\int_0^t \biggl(\int_r^t \int_{\R^d} \Gamma(t-s, x-y) \sum_{\ell, m=1}^{k} \partial_m \sigma_{i \ell}(\hat{u}^z_n(s,y)) \\
& \qquad \qquad \qquad \qquad \qquad \qquad \times \langle D^{(j)}_r(\hat{u}^z_{n,m}(s,y)),g^j_{n}(r,\ast) \rangle_{\mathcal{H}} \hat{W}^{n,\ell}(ds, dy)\biggr) dr, \\
&\mathcal{C}^z_{n,i,j}(t,x):=\int_0^t \biggl(\int_r^t \int_{\R^d} \sum_{m=1}^{k} \partial_m b_i(\hat{u}^z_n(s,x-y)) \\
& \qquad \qquad \qquad \qquad \qquad \qquad \times\langle D^{(j)}_r(\hat{u}^z_{n,m}(s,x-y)) , g^j_{n}(r,\ast) \rangle_{\mathcal{H}} \Gamma(t-s, dy) ds\biggr) dr.
\end{split}
\end{equation*} 
Note that $g^j_{n}(r,\ast)=1_{[t-2^{-n},t]} (r) g^j_{n}(r,\ast)$, and that $D_r(\hat{u}^z_{n,i}(s,y))=0$ if
$s<r$. Hence, using these facts and Fubini's theorem, it yields that
\begin{equation*} \begin{split}
\mathcal{B}^z_{n,i,j}(t,x)&=\int_{t-2^{-n}}^t  \int_{\R^d} \Gamma(t-s, x-y)\sum_{\ell, m=1}^{k} \partial_k \sigma_{i \ell}(\hat{u}^z_n(s,y)) \\
& \qquad \qquad  \qquad \qquad  \times \biggl(\int_{t-2^{-n}}^s \langle D^{(j)}_r(\hat{u}^z_{n,m}(s,y)),g^j_{n}(r,\ast) \rangle_{\mathcal{H}}dr \biggr)  \hat{W}^{n,\ell}(ds, dy), \\
\mathcal{C}^z_{n,i,j}(t,x)&=\int_{t-2^{-n}}^t  \int_{\R^d} \sum_{m=1}^{k} \partial_m b_i(\hat{u}^z_n(s,x-y)) \\
& \qquad \qquad  \qquad \qquad  \times \biggl(\int_{t-2^{-n}}^s  \langle D^{(j)}_r(\hat{u}^z_{n,m}(s,x-y)) , g^j_{n}(r,\ast) \rangle_{\mathcal{H}} dr \biggr)  \Gamma(t-s, dy) ds.
\end{split}
\end{equation*}
Therefore, we have proved that
\begin{equation} \label{ec1}
\varphi^z_{n,i,j}(t,x)=\mathcal{A}^z_{n,i,j}(t,x)+\mathcal{D}^z_{n,i,j}(t,x)+\mathcal{E}^z_{n,i,j}(t,x)+\mathcal{C}^z_{n,i,j}(t,x),
\end{equation}
where
\begin{equation*} \begin{split}
\mathcal{D}^z_{n,i,j}(t,x) &:=\int_{t-2^{-n}}^t  \int_{\R^d}
\Gamma(t-s, x-y) \sum_{\ell, m=1}^{k} \partial_m \sigma_{i
\ell}(\hat{u}^z_n(s,y))
\varphi^z_{n,m,j}(s,y) W^{\ell}(ds, dy), \\
\mathcal{E}^z_{n,i,j}(t,x) &:= \sum_{\ell,m=1}^{k} z_{\ell}
\int_{t-2^{-n}}^t  \langle \Gamma(t-s, x-\ast)  \partial_m \sigma_{i
\ell}(\hat{u}^z_n(s,\ast)) 
\varphi^z_{n,m,j}(s,\ast), g^{\ell}_n(s,\ast) \rangle_{\mathcal{H}} ds, \\
\mathcal{C}^z_{n,i,j}(t,x)&=\int_{t-2^{-n}}^t  \int_{\R^d}
\sum_{m=1}^{k} \partial_m b_i(\hat{u}^z_n(s,x-y))
\varphi^z_{n,m,j}(s,x-y) \Gamma(t-s, dy)  ds.
\end{split}
\end{equation*}

Next we study upper bounds for the $p$-moments of the four terms on the right hand side of $\varphi^z_{n,i,j}(t,x)$.  
For this, we assume that $\Vert z \Vert \leq \delta$ for some $\delta>0$.
 
First observe that since $\sigma$ is bounded, 
\begin{equation} \label{ec2} 
\vert \mathcal{A}^z_{n,i,j}(t,x) \vert \leq K.
\end{equation}

Appealing to \cite[(3.11)]{Nualart:07}
and using the fact that the partial
derivatives of the coefficients of $\sigma$ are bounded, we get that
for all $p>1$,
\begin{equation} \label{ec3} 
\E [\vert \mathcal{D}^z_{n,i,j}(t,x) \vert^{p}]
\leq  c_p v_n^{p/2} \sum_{m=1}^{k}  \sup_{(s,y) \in [0,T] \times \mathbb{R}^d}  
\E [  \vert \varphi^z_{n,m,j}(s,y) \vert^{p} ].
\end{equation}

Using the Cauchy-Schwarz inequality and the fact that the
derivatives of $\sigma$ are bounded, it yields that for all $p>1$,
\begin{equation} \label{ec4}
\E [\vert \mathcal{E}^z_{n,i,j}(t,x) \vert^{p}]  \leq c_{p} \delta^p \sum_{m=1}^{k}  \sup_{(s,y) \in [0,T] \times
\mathbb{R}^d} \E [  \vert \varphi^z_{n,m,j}(s,y) \vert^{p}].
\end{equation}

We finally use Minkowski's inequality with respect to the finite measure
$\Gamma(s,dy)ds$, the boundedness of the partial derivatives of the
coefficients of $b$, and hypothesis (\ref{hyp0}), to see that for all $p>1$,
\begin{equation} \label{ec5}\begin{split}
\E [\vert \mathcal{C}^z_{n,i,j}(t,x) \vert ^{p}]
& \leq  c_p  \sum_{m=1}^{k}  \sup_{(s,y) \in [0,T] \times \mathbb{R}^d}  \E [  \vert \varphi^z_{n,m,j}(s,y) \vert^{p}] \biggl(\int_{0}^{2^{-n}}  \int_{\R^d}  \Gamma(s, dy)  ds\biggr)^p\\
& \leq c_{p,T} 2^{-np}  \sum_{m=1}^{k}  \sup_{(s,y) \in [0,T] \times
\mathbb{R}^d}  \E [  \vert \varphi^z_{n,m,j}(s,y) \vert^{p}].
\end{split}
\end{equation}

Now, introducing (\ref{ec2}), (\ref{ec3}), (\ref{ec4}) and (\ref{ec5}) into (\ref{ec1}), we get that for all $p > 1$,
\begin{equation} \label{ec6} 
\sum_{m=1}^{k}\E [\vert \varphi^z_{n,m,j}(t,x) \vert ^{p}] \leq K_{p}+c_{p,T}(v_n^{p/2} + \delta^p+ 2^{-np}) \sum_{m=1}^{k}  \sup_{(s,y) \in [0,T] \times \mathbb{R}^d}  \E [  \vert \varphi^z_{n,m,j}(s,y) \vert^{p}].
\end{equation}
Observe that, proceeding as in the proof of (\ref{num2}), one can show that
\begin{equation} \label{ec7}
 \sup_{\Vert z \Vert \leq \delta}\sup_{(s,y) \in [0,T] \times \mathbb{R}^d}  \E [  \vert \varphi^z_{n,i,j}(s,y) \vert^{p}] < \infty,
\end{equation}
as we have shown in (\ref{ec1}) that $\varphi^z_{n,i,j}(t,x)$ satisfies a linear equation with initial condition $\mathcal{A}^z_{n,i,j}(t,x)$,
which is bounded. Thus, choosing $n$ large and $\delta$ small such that $c_{p,T}(v_n^{p/2} + \delta^p+ 2^{-np})\leq \frac12$, we obtain from (\ref{ec6}) and (\ref{ec7}) that
\begin{equation} \label{ec8}  
\sup_{\Vert z \Vert \leq \delta} \sup_{(s,y) \in [0,T] \times \mathbb{R}^d} \E [  \vert  \varphi^z_{n,i,j}(s,y) \vert ^{p} ]\leq K_{p}.
\end{equation}

We are now ready to show that (i) and (ii) of ${\bf H_{t,x}(y)}$ are verified.
\vskip 12pt
\noindent {\it Proof of} (i). Take $z=0$ in (\ref{ec1}), and use (\ref{ec3}), (\ref{ec5}) and (\ref{ec8}), to get that
\begin{equation*}
\varphi^0_{n,i,j}(t,x)=\mathcal{A}^0_{n,i,j}(t,x)+\mathcal{R}_{n,i,j}(t,x),
\end{equation*}
where, for any $p>1$,
\begin{equation*} 
\E [  \vert \mathcal{R}_{n,i,j}(t,x) \vert^{p}]\leq c_{p,T}(v_n^{p/2} + 2^{-np}).
\end{equation*}
We now write
\begin{equation*} \begin{split}
\mathcal{A}^0_{n,i,j}(t,x)  = \sigma_{ij}(u(t,x))+\mathcal{O}_{n,i,j}(t,x),
\end{split}
\end{equation*}
where
\begin{equation*}
\mathcal{O}_{n,i,j}(t,x)= v_n^{-1} \int_{0}^{2^{-n}} \bigg\langle
(\sigma_{ij}(u(t-r,\ast))-\sigma_{ij}(u(t,x))) \Gamma(r, x-\ast),
\Gamma(r,x-\ast) \bigg\rangle_{\mathcal{H}} dr.
\end{equation*}
By Lemma \ref{mes} and hypothesis {\bf (H6)}, it holds that for
all $p>1$,
\begin{equation*} 
\E  [\vert \mathcal{O}_{n,i,j}(t,x) \vert ^p ] \leq
c_{p,T} 2^{-n(\alpha-\eta) p} \; \; \; (\alpha>\eta).
\end{equation*}
Now, as $y \in \textnormal{Supp}(\P_{u(t,x)}) \cap \Sigma$, there exists $r_0>0$
such that for all $0<r \leq r_0$,
\begin{equation*}
B(y;r) \subset \Sigma, \; \text{ and } \; \P \, \{u(t,x) \in B(y;r)\}>0.
\end{equation*}
Moreover, $\sigma_{1}(y),...,\sigma_{k}(y)$ span $\R^k$. Let $\sigma(y)$ denotes the matrix with columns
$\sigma_{1}(y),...,\sigma_{k}(y)$. Hence, for all $0<r \leq r_0$,
\begin{equation*} 
\P \, \biggl\{ (\Vert u(t,x)-y \Vert \leq r) \cap ( \textnormal{det} \, \sigma(u(t,x)) \geq 2 c_1)\biggr\} >0,
\end{equation*}
where
$$
c_1:=\frac{1}{2} \left(\inf_{z \in B(y;r)} \inf_{\Vert \xi \Vert=1} \Vert \sigma(z) \xi \Vert^2\right)^k.
$$
Thus, we conclude that
$$\limsup_{n \rightarrow \infty} \P \, \biggl\{ (\Vert u(t,x)-y \Vert \leq r)
\cap ( \textnormal{det} \, \varphi^0_n(t,x) \geq c_1)\biggr\} >0,$$
which proves (i).

\vskip 12pt
\noindent {\it Proof of} (ii). We start proving that there exist $c>0$ and $\delta>0$, such that
\begin{equation} \label{equa13}
\lim_{n \rightarrow \infty} \P \, \biggl\{ \sup_{\Vert z \Vert \leq \delta} \Vert \varphi^z_n(t,x) \Vert  \leq c\biggr\} =1.
\end{equation}

Observe that (\ref{ec1}), together with (\ref{ec2}), (\ref{ec3}), (\ref{ec4}) and (\ref{ec5}), show that
\begin{equation} \label{estilastb}
 \Vert \varphi^z_n(t,x) \Vert  \leq K+\sup_{\Vert z \Vert \leq \delta}\Vert \mathcal{G}^z_{n}(t,x) \Vert,
\end{equation}
where for any $p>1$,
\begin{equation*} 
\sup_{\Vert z \Vert \leq \delta}\E [  \Vert \mathcal{G}^z_{n}(t,x) \Vert^{p}]\leq c_{p,T}(v_n^{p/2} + \delta^p+ 2^{-np}) .
\end{equation*}

Hence, in order to prove (\ref{equa13}) one only needs to check the
uniformity in $z$. Let $z$ and $z'$ such that $\Vert z \Vert \vee
\Vert z' \Vert \leq \delta$. Then, using (\ref{ec1}) and similar computations as in (\ref{ec3}), (\ref{ec4}) and (\ref{ec5}), and appealing to the Lipschitz property of the derivatives of the coefficients of $\sigma$ and $b$ and (\ref{ec8}),
together with the Cauchy-Schwarz inequality, and finally choosing $n$ large and $\delta$ small, we obtain that
\begin{equation*} \begin{split}
&\sup_{(s,y) \in [0,T] \times \mathbb{R}^d} \E \, [\Vert \varphi^z_n(s,y) -\varphi^{z'}_n(s,y)\Vert ^p] \leq c_{p} \sup_{(s,y) \in [0,T] \times \mathbb{R}^d} \E \, [\Vert \mathcal{A}^z_n(s,y) -\mathcal{A}^{z'}_n(s,y)\Vert ^p] \\
&\qquad \qquad \qquad \qquad \qquad +C_{p,T} \sup_{(s,y) \in [0,T] \times \mathbb{R}^d} \E \, \biggl[\Vert \hat{u}^z_n(s,y)-\hat{u}^{z'}_n(s,y)\Vert ^{2p} \biggr]^{1/2}.
\end{split}
\end{equation*}

We now claim that for all $p>1$,
\begin{equation} \label{cl}
\sup_{(s,y) \in [0,T] \times \mathbb{R}^d} \E \, [\Vert \hat{u}^z_n(s,y)-\hat{u}^{z'}_n(s,y) \Vert^{p}] \leq c_{p,T} \Vert z-z' \Vert^{p}.
\end{equation}
Indeed, using the Lipschitz property of the coefficients of $\sigma$ and $b$, together with \cite[(3.9), (5.15)]{Nualart:07},
we obtain that
\begin{equation*} \begin{split}
&\E \, [\Vert \hat{u}^z_n(t,x)-\hat{u}^{z'}_n(t,x) \Vert^{p}] \leq c_p \Vert z-z' \Vert^p \\
&\qquad \qquad +c_{p,T} \int_0^t \sup_{y \in \R^d}
\E \, [\Vert \hat{u}^z_n(s,y)-\hat{u}^{z'}_n(s,y) \Vert^{p}] \int_{\R^d} \vert \mathcal{F} \Gamma(t-s)(\xi)\vert^2 \mu(d\xi) ds \\
&\qquad \qquad +c_{p,T} \int_0^t \sup_{y \in \R^d} \E \, [\Vert
\hat{u}^z_n(t-s,y)-\hat{u}^{z'}_n(t-s,y) \Vert^{p}] \int_{\R^d}
\Gamma(s, dy) ds.
                  \end{split}
\end{equation*}
Thus, hypotheses (\ref{hyp0}) and (\ref{hyp1}), and Gronwall's lemma prove (\ref{cl}).

We next use the Lipschitz property of $\sigma$ together with (\ref{cl}), to obtain that
\begin{equation*}
\sup_{(s,y) \in [0,T] \times \mathbb{R}^d} \E \, [\Vert
\mathcal{A}^z_n(s,y) -\mathcal{A}^{z'}_n(s,y) \Vert ^{p}] \leq
c_{p,T} \Vert z-z' \Vert^{p}.
\end{equation*}
Therefore, we have proved that
\begin{equation*}
\E \, [\Vert \varphi^z_n(t,x) -\varphi^{z'}_n(t,x)\Vert ^p] \leq c_{p,T} \Vert z-z' \Vert^{p},
\end{equation*}
which, together with (\ref{estilastb}) concludes the proof of (\ref{equa13}).

The proof of (ii) for $\psi^z_n(t,x) $ follows along the same lines, therefore
we only give the main steps.
Let
$$
\psi^z_{n,i,j,m}(t,x)=\frac{\partial^2}{\partial z_m \partial z_j} \hat{u}^z_{n,i}(t,x).
$$
Then we have that
\begin{equation*}
\psi^z_{n,i,j,m}(t,x)=\int_0^t \int_0^t \langle D_s^{(m)} D^{(j)}_r(\hat{u}^z_{n,i}(t,x)) , g^j_{n}(r, \ast) \otimes g^m_{n}(s, \ast) \rangle_{\mathcal{H}\otimes \mathcal{H}} dr ds,
\end{equation*}
where the $\mathcal{H}\otimes \mathcal{H}$-valued process $D_s^{(m)} D^{(j)}_r(\hat{u}^z_{n,i}(t,x))$, satisfies the following linear stochastic differential equation
\begin{equation*}
 \begin{split}
D_s^{(m)} D^{(j)}_r(\hat{u}^z_{n,i}(t,x)) &= \Gamma(t-r, x-\ast) D_s^{(m)}(\sigma_{ij}(u(r, \ast)))  +
\Gamma (t-s, x-\ast ) D^{(j)}_r (\sigma_{i m}(u(s,\ast)))\\
&+\int_{r \vee s}^t \int_{\R^d} \Gamma (t-w, x-y) \sum_{\ell=1}^{k} D_s^{(m)} D^{(j)}_r (\sigma_{i \ell}(u(w,y))) W^{\ell}(dw, dy) \\
&+\int_{r \vee s}^t \int_{\R^d} \Gamma (t-w, dy) D_s^{(m)} D^{(j)}_r
(b_i(u(w,x-y))) \,dw.
 \end{split}
\end{equation*}
Using the chain rule and the stochastic differential equation satisfied by the first derivative,
one can compute the different terms of $\psi^z_{n,i,j,m}(t,x)$ as we did for $\varphi^z_{n,i,j}(t,x)$, and bound their $p$th-moments.
Finally, one estimates the $p$th-moments of the difference  $\psi^z_n(t,x)- \psi^{z'}_n(t,x)$
as we did for $\varphi^z_n(t,x)$ in order to get the desired result.

\subsection{Proof of Theorem \ref{tdim1}}

The existence and smoothness of the density follows from
\cite[Theorem 6.2]{Nualart:07}. Hence, we only need to prove the
strict positivity. For this, one applies Theorem \ref{tm1} as in
Theorem \ref{maintt} taking $\Sigma=\R$. In this case, in order
to prove hypothesis (i), one proceeds as in
Theorem \ref{maintt}, using hypotheses {\bf (H1)},
to show that for all $p>1$,
\begin{equation*}
\lim_{n\rightarrow \infty} \E  \biggl[\vert \varphi^0_{n}(t,x)- \mathcal{A}^0_{n}(t,x) \vert^p \biggr] =0.
\end{equation*}
Next using the non-degeneracy assumption on
$\sigma$, one gets that
$\mathcal{A}^0_{n}(t,x) \geq c$, which implies that
\begin{equation*}
\varphi^0_{n}(t,x) \geq c -\vert \varphi^0_{n}(t,x)-
\mathcal{A}^0_{n}(t,x) \vert.
\end{equation*}
Finally, these assertions imply that for all $y \in \textnormal{Supp}(\P_{u(t,x)})$
$$\limsup_{n \rightarrow \infty} \P \, \biggl\{ (\vert u(t,x)-y \vert \leq r)
\cap ( \varphi^0_n(t,x) \geq \frac{c}{2})\biggr\} >0,$$ which
proves (i). The proof of (ii) follows exactly as in Theorem
\ref{maintt}.
\vskip 12pt
\noindent {\bf Acknowledgement}.
The author would like to thank Professors Vlad Bally, Robert C. Dalang, Marta Sanz-Sol\'e and Llu\'is Quer-Sardanyons for stimulating discussions on the subject, and the anonymous referees that handled this article by all their comments and corrections that have helped to improve the paper.

\end{document}